\newtheorem{theorem}{Theorem}[section]
\newtheorem{lemma}[theorem]{Lemma}
\newtheorem{corollary}[theorem]{Corollary}
\theoremstyle{definition}
\newcommand{\C}{\mathbb{C}}
\newcommand{\LL}{\mathscr{L}}
\newcommand{\G}{\mathbb{G}}
\newcommand{\D}{\mathbb{D}}
\newcommand{\autd}{Aut (\mathbb{D})}
\newcommand{\autg}{Aut (\mathbb{G})}
\theoremstyle{remark}
\numberwithin{equation}{section}
\begin{document}

\title{On the geometry of the symmetrized bidisc}

%    author one information
\author{Tirthankar Bhattacharyya}
\address{Department of Mathematics, Indian Institute of Science, Bangalore 560012, India.}
\email{tirtha@iisc.ac.in}
\thanks{}

%    author two information
\author{Anindya Biswas}
\address{Department of Mathematics, Indian Institute of Science, Bangalore 560012, India.}
\email{anindyab@iisc.ac.in}
\thanks{}

\author{Anwoy Maitra}
\address{Department of Mathematics, Indian Institute of Science, Bangalore 560012, India.}
\email{anwoymaitra@iisc.ac.in}
\thanks{}

\subjclass[2010]{Primary 32M05, Secondary 32A07}

\keywords{}

\date{}

\dedicatory{}

\begin{abstract}
	We study the action of the automorphism group of the $2$ complex dimensional manifold symmetrized bidisc $\G$ on itself. The automorphism group is 3 real dimensional. It foliates $\G$ into leaves all of which are 3 real dimensional hypersurfaces except one, viz., the royal variety. This leads us to investigate Isaev's classification of all Kobayashi-hyperbolic 2 complex dimensional  manifolds for which the group of holomorphic automorphisms has real dimension 3 studied by Isaev. Indeed, we produce a biholomorphism between the symmetrized bidisc and the domain
	\[\{(z_1,z_2)\in \mathbb{C} ^2 : 1+|z_1|^2-|z_2|^2>|1+ z_1 ^2 -z_2 ^2|, Im(z_1 (1+\overline{z_2}))>0\}\]
	in Isaev's list. Isaev calls it $\mathcal D_1$. The road to the biholomorphism is paved with various geometric insights about $\G$.
	
	Several consequences of the biholomorphism follow including two new characterizations of the symmetrized bidisc and several new characterizations of $\mathcal D_1$. Among the results on $\mathcal D_1$, of particular interest is the fact that $\mathcal D_1$ is a ``symmetrization''. When we symmetrize (appropriately defined in the context in the last section) either $\Omega_1$ or $\mathcal{D}^{(2)} _1$ (Isaev's notation), we get $\mathcal D_1$.  These two domains $\Omega_1$ and $\mathcal{D}^{(2)} _1$ are in Isaev's list and he mentioned that these are biholomorphic to $\mathbb D \times \D$. We produce explicit biholomorphisms between these domains and $\D \times \D$.

\end{abstract}

\maketitle

\section{Introduction} % use lowercase except for proper names

Given a domain $M$ in $\mathbb C^n$, one can ponder the extent to which its group of holomorphic automorphisms $G$ determines $M$ up to biholomorphic equivalence. A remarkable result of Bedford and Dadok \cite{BD} and of Saerens and Zame \cite{SZ} shows that for every compact Lie group $G$, there is a smoothly bounded strongly pseudoconvex domain $M$ in $\mathbb C^N$ for some $N$, for which Aut($M$) $=G$. Moreover, there are uncountably many distinct domains with this property.

This is why it is natural to consider a domain $M$ with non-compact automorphism group. There has been a huge amount of research classifying such domains. Perhaps the best general survey for this topic is Krantz \cite{KrantzSurvey}.

There is a 2 complex dimensional Kobayashi hyperbolic manifold whose automorphism group is non-compact and 3 real dimensional; all orbits, except one, are three real dimensional hypersurfaces and the exceptional orbit is an analytic disc. This is called the symmetrized bidisc:
\begin{align*}
\G =\{(z_1 + z_2, z_1 z_2):z_1,z_2\in \mathbb{D}\},
\end{align*}
where $\mathbb{D}$ denotes the open unit disc in the complex plane $\mathbb{C}$. The above mentioned and other geometric properties of $\mathbb G$ are discussed in Section 2. The properties remind us of one of the classical domains which first appeared in Cartan \cite{Cartan} (page 61), and is greatly studied by Isaev:
\[\mathcal D_1 = \{(z_1,z_2)\in \mathbb{C}^2 : 1+|z_1|^2-|z_2|^2>|1+ z_1 ^2 -z_2 ^2|, Im(z_1 (1+\overline{z_2}))>0\}.\]
The natural question is whether these two domains are biholomorphic.

One of the aims of this paper is to explicitly exhibit a biholomorphic map between the symmetrized bidisc and $\mathcal D_1$. This is where the seminal work \cite{Isaev} is useful where Isaev classified all domains in $\mathbb C^2$ which have $3$ (real) dimensional automorhism groups. We show the biholomorphism in Section 3. This biholomorphic identification with one of Isaev's domains immediately leads to a couple of new characterizations of the symmetrized bidisc. In the final section (Section 4), we give several applications. The applications include exhibiting explicit biholomorphisms between $\D \times \D$ and $\Omega_1$ as well as $\D \times \D$ and $\mathcal{D}^{(2)} _1$. The domains $\Omega_1$ and $\mathcal{D}^{(2)} _1$ are from Isaev's list and he had mentioned that they are biholomorphic to $\D\times \D$ although no explicit formula were known so far. We also show that $\mathcal{D}_1$ is a ``symmetrization" of $\Omega_1$, as well as of $\mathcal{D}^{(2)} _1$ by giving explicit maps.

We would like to mention here that recently a geometric characterization of the symmetrized bidisc was found by Agler, Lykova and Young in \cite{Agler Lykova Young}. While they, roughly speaking, fibrated the symmetrized bidisc over an analytic disc called the royal disc, we fibrate it over an interval of the real line. Consequently, the fibres obtained in \cite{Agler Lykova Young} were themselves analytic discs while in our case the fibres, with the exception of one, are three real dimensional hypersurfaces. This is what finally led us to $\mathcal D_1$.

Very rarely, one finds a domain that is equally interesting to complex analysts and operator theorists. Apart from the long-studied Euclidean ball and the polydisc, the only other domain where operator theory is very rich and complex analysis is highly advanced is the symmetrized bidisc, see \cite{ay-jfa}, \cite{BPSR}, \cite{J-F-Invariant} and \cite{KZ}.
\section{Intrinsic geometry of $\mathbb G$}\label{Geometry of G}

Consider the map
\begin{align}
&sym :\mathbb{D} \times \mathbb{D}  \rightarrow \mathbb{C} \times \mathbb{C}\notag\\
&\underline{z}=(z_1, z_2)  \mapsto (z_1 +z_2, z_1 z_2).\label{Definition of sym}
\end{align}
It is a proper holomorphic map (see \cite{J-F-Invariant}, page 247). Thus, $\G$ is a proper holomorphic image of the bidisc.

Unlike the automorphism groups of the unit polydisc or the unit ball in $\C ^n, n\geq 1$, the automorphism group of $\G$ does not act transitively. This key difference  is the heart of this paper. Let $D= \{(z,z):z\in \mathbb{D}\}$ and $\Delta=\{(2z,z^2):z\in \D \}$. Then $sym(D)=\Delta$ is called the \textit{royal variety} of $\G$. It follows, from the explicit description of $\autg$ provided below that  $\Delta$ is invariant under the action of $Aut (\G)$ and it acts transitively on $\Delta$. In this section, we shall explore some of the properties of the orbits of the action of the automorphism group on $\G$. First, in Subsection 2.1, we shall show that all orbits except the one mentioned above are real three-dimensional hypersurfaces. These three-dimensional orbits give a foliation of $\G - \Delta$. Then, in the next subsection, it will be established that each three-dimensional orbit can be realized as a $\mathbb Z_2$ action on $\autd$ and these orbits are diffeomorphic to each other. The last subsection will provide us with the fact that these orbits are strictly pseudoconvex.

We start by noting that $\G$ is Kobayashi hyperbolic because of the general result that any bounded open set in any finite dimensional complex Euclidean space is Caratheodory hyperbolic and the Agler-Young result in \cite{A-Y} that Caratheodory and Kobayashi distances agree on $\G$. $\G$ is the first known example of a non-convex open set on which these two pseudo-hyperbolic distances agree.

For each $\varphi \in Aut(\mathbb{D})$ (the automorphism group of $\mathbb{D}$), we can define an automorphism $H_\varphi$ of $\mathbb{G}$ by
\begin{align*}
H_\varphi (z_1 + z_2, z_1z_2)= (\varphi (z_1) + \varphi(z_2), \varphi (z_1) \varphi(z_2)).
\end{align*}
An interesting and well-known fact is that these are the only automorphisms of $\G$, i.e., the automorphism group of $\G$ is given by
\begin{align*}
Aut (\G)=\{H_\varphi :\varphi\in Aut(\mathbb{D}) \}.
\end{align*}
This can be found in  \cite{J-F-Invariant}.

\subsection{Foliation of $\G$}

We define a relation $`\sim\textrm'$ on $\G$ by stating that $(s,p)\sim (t,q)$ if and only if there is an $H_\varphi \in \autg$ such that $H_\varphi (s,p)=(t,q)$. Note that for all $a$ in the half-open interval $[0,1)$, the pair $(a,0)$ is a member of $\G$.

\begin{theorem}\label{theorem-0}
	The relation  $`\sim\textrm'$ defined above is an equivalence relation. If the equivalence class of the point $(s,p)$ of $\G$ is denoted by $[(s,p)]$, then the equivalence classes are given by $\{[(a,0)]: a\in [0,1)\}$.
\end{theorem}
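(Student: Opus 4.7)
The first step is to note that $\sim$ is an equivalence relation essentially for free: it is defined as the orbit equivalence of the group action of $\autg$ on $\G$, so reflexivity follows from $\varphi = \mathrm{id}_\D$, symmetry from $H_{\varphi^{-1}} = (H_\varphi)^{-1}$, and transitivity from $H_{\varphi_2}\circ H_{\varphi_1} = H_{\varphi_2\circ\varphi_1}$. All three identities follow directly from the defining formula for $H_\varphi$.

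For the parametrization, the strategy splits into existence and uniqueness of a representative. Given $(s,p)\in\G$, I would recover the unordered pair $\{z_1,z_2\}\subset\D$ as the roots of the monic polynomial $T^2-sT+p$. Choose any $\varphi_0\in\autd$ with $\varphi_0(z_1)=0$ (e.g.\ a M\"obius map of the disc) and then post-compose with a rotation fixing the origin to arrange that the image of $z_2$ is a non-negative real number $a$. Writing $\varphi$ for this adjusted automorphism, one has $\varphi(z_1)=0$ and $\varphi(z_2)=a\in[0,1)$, so
\[
H_\varphi(s,p)=\bigl(\varphi(z_1)+\varphi(z_2),\,\varphi(z_1)\varphi(z_2)\bigr)=(a,0).
\]
This shows every equivalence class meets $\{(a,0):a\in[0,1)\}$.

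For uniqueness the key observation is that the unordered pair $\{z_1,z_2\}$ is canonically attached to $(s,p)\in\G$, and the action of $H_\varphi$ sends it to $\{\varphi(z_1),\varphi(z_2)\}$. Hence the Poincar\'e hyperbolic distance $\rho_\D(z_1,z_2)$ descends to a well-defined $\autg$-invariant function on $\G$. On a representative $(a,0)$ the root pair is $\{0,a\}$, and $\rho_\D(0,a)=\tanh^{-1}a$ is strictly increasing in $a\in[0,1)$; therefore distinct values of $a$ lie in distinct orbits, completing the proof.

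The main potential obstacle is the uniqueness step. A direct case analysis on the form of $\varphi$ (whether it fixes $0$, sends $a$ to $0$, etc.) is possible but cluttered, especially when one has to rule out sign ambiguities. Recognizing the pullback hyperbolic distance of the two disc-roots as an $\autg$-invariant sidesteps this case analysis and makes the separation of orbits transparent.
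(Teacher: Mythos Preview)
Your proof is correct, and the existence half matches the paper's argument almost verbatim: send one root to $0$ via a M\"obius map, then rotate. The difference lies in the uniqueness step. The paper argues directly: if $H_\varphi(a,0)=(b,0)$ with $\varphi(z)=e^{i\theta}(\alpha-z)/(1-\overline{\alpha}z)$, then $(\varphi(a),\varphi(0))$ equals either $(b,0)$ or $(0,b)$, and in either case one extracts $\pm e^{i\theta}a=b$, forcing $a=b$ since both are non-negative. Your route via the $\autd$-invariance of the hyperbolic (equivalently M\"obius) distance between the two roots is cleaner and avoids this case split; interestingly, the paper introduces exactly this invariant a few lines later as the map $q(z_1+z_2,z_1z_2)=|\varphi_{z_1}(z_2)|$ (see \eqref{Submersions are Mobius distance}) to build the foliation, but does not exploit it for the orbit-separation step of Theorem~\ref{theorem-0} itself. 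So your argument effectively front-loads the key invariant of the paper.
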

\begin{proof}
	Clearly  $`\sim\textrm'$ is an equivalence relation. To find the equivalence classes, consider $(s,p)\in \G$. Then there are $z_1,z_2 \in \D$ such that $(s,p)= (z_1 +z_2, z_1 z_2)$. Let for $\alpha\in \D$, $\varphi_\alpha \in \autd$ be given by
	\[\varphi_\alpha (z)= \frac{\alpha -z}{1 - \overline{\alpha} z}.\]
	So we have
	\[H_{\varphi_{z_1}} (z_1 +z_2, z_1 z_2)= (\varphi_{z_1} (z_2),0).\]
	Using a rotation, we see that there is a $\varphi\in \autd $ such that \[H_\varphi (z_1 +z_2, z_1 z_2)= (|\varphi_{z_1} (z_2)|,0).\]
	Since $0\leq |\varphi_{z_1} (z_2)|<1$ and $\varphi_{z_1} :\D \rightarrow \D$ is onto, each equivalence class must contain an element of the form $(a,0)$ for some $a\in [0,1)$.
	
	To complete the proof, it is sufficient to show that $[(a,0)]= [(b,0)]$ if and only if $a=b$ for $a,b \in [0,1)$.
	Start with the assumption that $[(a,0)]= [(b,0)]$ for some $a,b \in [0,1)$.
	Since $(a,0)\sim (b,0)$, there is a $\varphi \in \autd$
	such that $H_\varphi (a,0)= (b,0)$.
	Now, there is a $\theta \in [0,2\pi]$ and an $\alpha \in \D$ such that \[\varphi (z) = e^{i\theta} \frac{\alpha -z}{1- \overline{\alpha}z}.\]
	So $(\varphi(a)+ \varphi(0),\varphi(a) \varphi(0))=(b,0)$. Thus $(\varphi(a), \varphi(0))$ is either $(b,0)$ or $(0,b)$. If $(\varphi(a), \varphi(0))=(b,0)$, then $-e^{i\theta}a=b$ and if $(\varphi(a), \varphi(0))=(0,b)$, then $e^{i\theta}a=b$. Since both of $a$ and $b$ are non-negative, we deduce that $a=b$. Hence $\{[(a,0)]: a\in [0,1)\}$ is the complete list of the equivalence classes.
\end{proof}
By the definition of $`\sim\textrm'$, $\autg$ acts transitively on the equivalence class $[(a,0)]$ for each $a\in [0,1)$. Also note that the equivalence class $[(0,0)]$ is $\Delta.$
\begin{lemma}\label{proposition-1}
	Each equivalence class $[(a,0)], a\in [0,1),$ is a closed path-connected subset of $\G$.
\end{lemma}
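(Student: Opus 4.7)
By the definition of $\sim$ and Theorem \ref{theorem-0}, the equivalence class $[(a,0)]$ is precisely the orbit of $(a,0)$ under the natural action of $\autg$ on $\G$. I plan to treat the two assertions separately: path-connectedness will follow from the connectedness of $\autd$, while closedness will be obtained by exhibiting the equivalence classes as the fibres of a continuous function $\delta:\G\to[0,1)$.

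For path-connectedness, recall that every element of $\autd$ has the form $\varphi(z)=e^{i\theta}(\alpha-z)/(1-\overline{\alpha}z)$ with $(\alpha,\theta)\in\D\times[0,2\pi]$, so $\autd$ is a path-connected topological group. The map $\Phi:\autd\to\G$ given by $\Phi(\varphi):=H_\varphi(a,0)$ is continuous, and its image is precisely $[(a,0)]$ by the definition of $\sim$. Thus $[(a,0)]$ is the continuous image of a path-connected space, and hence path-connected.

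For closedness, define $\delta:\G\to[0,1)$ by $\delta(s,p):=|\varphi_{z_1}(z_2)|$, where $z_1,z_2\in\D$ are the two roots of $w^2-sw+p=0$. Since $|1-\overline{z_1}z_2|=|1-\overline{z_2}z_1|$, the quantity $|\varphi_{z_1}(z_2)|$ is symmetric in $z_1,z_2$, so $\delta$ is well-defined (independent of the labeling of the roots). Using the symmetric-function identities $(z_1-z_2)^2=s^2-4p$, $|z_1+z_2|^2=|s|^2$, and $|z_1z_2|^2=|p|^2$, one eliminates the $z_i$ to obtain the explicit formula
\[
\delta(s,p)^2 \;=\; \frac{2|s^2-4p|}{2+2|p|^2-|s|^2+|s^2-4p|},
\]
whose denominator equals $2|1-\overline{z_1}z_2|^2$ and is therefore strictly positive on $\G$. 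Hence $\delta$ is continuous on $\G$. Moreover, the construction in the proof of Theorem \ref{theorem-0} shows that $(s,p)\sim(a,0)$ if and only if $\delta(s,p)=a$. Consequently $[(a,0)]=\delta^{-1}(\{a\})$ is the preimage of a closed singleton under a continuous map, and is therefore closed in $\G$.

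The only point requiring real care is the well-definedness of $\delta$, i.e., the symmetry of $|\varphi_{z_1}(z_2)|$ under the interchange of $z_1$ and $z_2$, together with its expressibility as a function of the elementary symmetric functions $(s,p)$ rather than of the unordered pair $\{z_1,z_2\}$. Once this is verified, continuity is a direct computation and the closedness of each equivalence class follows at once from the level-set description.
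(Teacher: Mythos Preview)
Your argument is correct. For path-connectedness you and the paper do essentially the same thing---both exploit that $\autd$ is path-connected and that the orbit map $\varphi\mapsto H_\varphi(a,0)$ is continuous---though the paper writes out an explicit interpolation $\varphi_t$ in the parameters $(\theta,\alpha)$ rather than invoking the general principle that continuous images of path-connected spaces are path-connected.

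For closedness the two proofs diverge genuinely. The paper argues sequentially: given $(s_n,p_n)=H_{\varphi_n}(a,0)\to(s,p)$, it extracts a convergent subsequence of the parameters $(\theta_n,\alpha_n)\in[0,2\pi]\times\overline{\D}$ and then rules out $|\alpha|=1$ by observing that this would force $|s|=2$. Your approach instead identifies each class as a level set of the continuous function $\delta(s,p)=|\varphi_{z_1}(z_2)|$, for which you supply the explicit symmetric-function formula $\delta(s,p)^2=2|s^2-4p|/(2+2|p|^2-|s|^2+|s^2-4p|)$. This is exactly the map the paper later calls $q$ (see \eqref{Submersions are Mobius distance} and Theorem~\ref{theorem-2}); you are in effect anticipating that construction and using it to get closedness for free. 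Your route is cleaner and avoids the compactness extraction, at the cost of having to verify the explicit formula and the nonvanishing of the denominator; the paper's sequential argument is more self-contained at this stage but duplicates work done later.
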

\begin{proof}
	It is sufficient to show that that for any $(s,p)$ in the equivalence class $[(a,0)]$ of the point $(a,0)$, there is a path fron $(a,0)$ to $(s,p)$.
	By definition of the equivalence class $[(a,0)]$, there is a $\varphi \in\autd$ such that $H_\varphi (a,0)= (s,p)$. Every such $\varphi$ is of the form \[ \varphi (z) = e^{i\theta} \frac{\alpha -z}{1- \overline{\alpha}z}\]
	for some $\theta \in [0,2\pi]$ and $\alpha\in \D$. For $t\in [0,1]$, let $\theta_t =t\theta + (1-t)\pi $, $\alpha_t =t \alpha$ and \[\varphi_t (z)=e^{i\theta_t} \frac{\alpha_t -z}{1- \overline{\alpha_t}z}.\]
	Clearly $\varphi_t\in \autd$ for all $t\in [0,1]$, $\varphi_1 (z) =\varphi (z)$ and $\varphi_0 (z) =z$ for all $z\in \mathbb{D}$. Define a map $h:[0,1]\rightarrow \G$ by
	\[ h(t)= H_{\varphi_t}(a,0) = (\varphi_t(a)+\varphi_t(0),\varphi_t(a)\varphi_t(0)).\]
	This is a path with $h(0)= (a,0)$ and $h(1)=H_\varphi (a,0)= (s,p)$. This proves that $[(a,0)]$ is path-connected. To show that the equivalence class $[(a,0)]$ is closed, consider a sequence $\{(s_n, p_n)\}_{n=1} ^\infty $ in $[(a,0)]$ and suppose that it converges to $(s,p)\in \G$.
	For each $n$, there is a $\theta_n \in [0,2\pi]$ and an $\alpha_n \in \D$ such that with $\varphi_n (z) = e^{i\theta_n} \frac{\alpha_n - z}{1- \overline{\alpha_n} z}$, one can write
	\[H_{\varphi_n} (a,0)= (s_n,p_n).\]
	Passing to a subsequence, if necessary, we may assume that $\{\theta_n\}_{n=1} ^\infty $ converges to $\theta$ and  $\{\alpha_n\}_{n=1} ^\infty $ converges to $\alpha$, for some $\theta \in [0,2\pi]$ and $\alpha \in\overline{\D}$. So we have
	\[\Bigg(e^{i\theta}\frac{\alpha - a}{1- \overline{\alpha} a} +e^{i\theta}\alpha, e^{2i\theta}\alpha \frac{\alpha - a}{1- \overline{\alpha} a}\Bigg)=(s,p).\]
	If $|\alpha|=1$, then we get $(s,p)=(2e^{i\theta}\alpha, e^{2i\theta}\alpha^2)$. But this is impossible because $(s,p)\in \G$ satisfies $|s|<2$. So $\alpha\in \D$. Thus, we get $H_\varphi (a,0)=(s,p)$ where $\varphi (z) = e^{i\theta} \frac{\alpha -z}{1- \overline{\alpha}z} $ is in $\autd$. This proves that $(s,p)\in [(a,0)]$. Hence $ [(a,0)]$ is closed.
\end{proof}

A fact worth noting is that given any $(s,p),(t,q)\in [(a,0)], a\in (0,1)$, there are exactly two $\varphi \in \autd$ such that $H_\varphi (t,q)= (s,p)$. When $a=0$, the number of such automorphisms is infinite.

From the previous discussion, it is clear that $\G$ has a complex one-dimensional orbit and uncountably many real three-dimensional orbits.

We identify $\mathbb{C}^2$ with $\mathbb{R}^4$, consider the real $4$-manifolds $\mathbb D \times \D$ and $\G$ and the following diagram
\[
\xymatrix{
	\D \times \D \ar[r]^{sym} \ar[dr]_{f= q\circ sym} & \G \ar[d]^{q} \\
	& [0,1)
}
\]
where
\begin{align}\label{Submersions are Mobius distance}
f(z_1, z_2)=q(z_1 + z_2, z_1 z_2)=|\varphi_{z_1} (z_2)|=\Bigg|\frac{z_1 -z_2}{1-\overline{z_1}z_2}\Bigg|.
\end{align}
Thus, $f$ is the M\"obius distance between the points $z_1$ and $z_2$. Clearly both $f$ and $q$ are surjective $C^\infty$ functions.

Note that $\mathbb{Z}_2 =\{\pm 1\}$ has a natural action on $\D \times \D$ given by
\[(+1)\cdot(z_1, z_2)=(z_1, z_2)\,\,\text{and}\,\,
(-1)\cdot(z_1, z_2)=(z_2, z_1).\]
It is clear that  $\G =   \D \times \D\big/\mathbb{Z}_2$. We collect some obvious facts in a lemma. Proofs are omitted.
\begin{lemma}\label{Lem1}
	With $f,sym$ and $q$ as above, the following are true.
	\begin{enumerate}
		\item The fixed point set of $\mathbb{Z}_2$ is the set $D=\{(z,z):z\in \D\}$.
		\item $f= q\circ sym$.
		\item $sym|_{ \D \times \D - D}$ is a 2-to-1 map and $f(\underline{z})= f((-1)\underline{z})\,$ for all $\underline{z}=(z_1, z_2)\in \D \times \D.$
		\item The action of $\mathbb{Z}_2$ on $\D \times \D -D$ is properly discontinuous.
		\item  $\G$ is a smooth 4-manifold.
	\end{enumerate}
\end{lemma}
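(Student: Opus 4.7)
The plan is to dispatch the five items in turn, with everything following either immediately from the defining formulas or from elementary properties of the quotient action.

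Items (1)--(3) are essentially bookkeeping. For (1), the swap $(z_1,z_2)\mapsto(z_2,z_1)$ fixes a point iff $z_1=z_2$. For (2), I would simply unfold the definitions: $q\circ sym(z_1,z_2)=q(z_1+z_2,z_1z_2)=|\varphi_{z_1}(z_2)|=f(z_1,z_2)$, which is precisely the computation carried out in (2.1). The symmetry $f(\underline z)=f((-1)\underline z)$ in (3) is then immediate from (2) together with the manifest invariance of $sym$ under the swap, or directly from the symmetric shape of $|(z_1-z_2)/(1-\overline{z_1}z_2)|$. For the two-to-one statement in (3), I would note that $sym(z_1,z_2)=sym(w_1,w_2)=(s,p)$ forces both pairs to constitute the root multiset of the monic quadratic $X^2-sX+p$; off the diagonal this multiset has two distinct elements, so the only preimages are $(z_1,z_2)$ and $(z_2,z_1)$.

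For (4), given $(z_1,z_2)\in\D\times\D-D$ with $z_1\neq z_2$, I would choose disjoint open discs $B_1\ni z_1$ and $B_2\ni z_2$ contained in $\D$, and set $U=B_1\times B_2$; then $(-1)\cdot U=B_2\times B_1$ is disjoint from $U$, which, combined with the finiteness of $\mathbb{Z}_2$, proves proper discontinuity.

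Finally, for (5), I would reduce to showing that $\G$ is open in $\C^2$, since any open subset of $\C^2\cong\mathbb{R}^4$ is automatically a smooth $4$-manifold. Openness is the classical fact that $(s,p)\in\G$ iff both roots of $X^2-sX+p$ lie in $\D$, combined with continuous dependence of those roots on the coefficients. The only place where one might anticipate trouble is on the royal variety $\Delta$, where $sym$ has vanishing Jacobian $z_1-z_2$ and ceases to be a local biholomorphism; but the root-continuity argument treats smooth and singular loci alike, so the manifold structure on $\G$ is simply its inherited structure as an open subset of $\C^2$.
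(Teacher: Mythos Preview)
Your proposal is correct: each of the five items is handled by exactly the kind of elementary verification one expects, and the arguments you sketch (the swap fixed-point check, unwinding the definitions for $f=q\circ sym$, the root-multiset argument for the $2$-to-$1$ claim, the disjoint-box argument for proper discontinuity, and openness of $\G$ in $\C^2$ via continuous dependence of roots) are all valid. The paper itself simply declares these facts obvious and omits the proofs, so there is no divergence of approach to compare.
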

We want to show that the $f$ defined above is a submersion of $\D \times \D - D$ into the open interval $(0,1)$. To that end, we need the following computations.
\begin{lemma}\label{Lem2}
	Let $\underline{z}=(z_1,z_2)=(x_1, y_1,x_2,y_2)$ and $f(\underline{z})=f(z_1,z_2)=|\varphi_{z_1} (z_2)|$. Then we have the following.
	\begin{enumerate}
		\item $D_{\underline{w}} f =\big(\partial_{x_1}f,\partial_{y_1} f,\partial_{x_2}f,\partial_{y_2}f\big)(\underline{w})\neq 0$ for all $\underline{w} \in \D\times \D-D$.
		\item $D_{\underline{w}} f : T_{\underline{w}} (\D\times \D-D)\rightarrow T_{f(\underline{w})} (0,1)$ is onto for all  $\underline{w} \in \D\times \D-D$.
	\end{enumerate}
\end{lemma}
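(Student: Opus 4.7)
My plan starts with the observation that parts (1) and (2) are equivalent: since $T_{f(\underline{w})}(0,1) \cong \mathbb{R}$ is one-dimensional, a linear map from the four-dimensional tangent space $T_{\underline{w}}(\D \times \D - D)$ to $\mathbb{R}$ is surjective if and only if it is nonzero. So the whole lemma reduces to verifying (1), namely that $D_{\underline{w}} f \neq 0$ for every $\underline{w} \in \D \times \D - D$.

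For (1), rather than grind out the four partial derivatives of $f$ at a generic point and then argue non-vanishing of the resulting rational expression, I would exploit the M\"obius invariance of the M\"obius distance $f$. For each $\psi \in \autd$, consider the diagonal diffeomorphism $F_\psi \colon \D \times \D \to \D \times \D$, $F_\psi(z_1,z_2) = (\psi(z_1),\psi(z_2))$. The standard identity $|\varphi_{\psi(a)}(\psi(b))| = |\varphi_a(b)|$ (since $\varphi_{\psi(a)} \circ \psi$ and $\varphi_a$ both send $a$ to $0$, so differ by a rotation) translates into $f \circ F_\psi = f$. Since $F_\psi$ also preserves the diagonal $D$, it restricts to a diffeomorphism of $\D \times \D - D$. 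Applying the chain rule to $f \circ F_\psi = f$ and using the invertibility of $D_{\underline{w}} F_\psi$ yields that $D_{\underline{w}} f$ vanishes if and only if $D_{F_\psi(\underline{w})} f$ does.

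It therefore suffices to check (1) at one representative of each diagonal $\autd$-orbit. Given $\underline{w} = (w_1, w_2) \in \D \times \D - D$, taking $\psi = \varphi_{w_1}$ produces $F_\psi(\underline{w}) = (0, a)$ with $a = \varphi_{w_1}(w_2) \in \D \setminus \{0\}$, so the entire lemma reduces to showing $D_{(0,a)} f \neq 0$ for every such $a$. But the restriction of $f$ to the slice $\{0\} \times \D$ is simply $(0, z_2) \mapsto |\varphi_0(z_2)| = |z_2|$, whose gradient at $a = a_1 + i a_2 \neq 0$ is the unit vector $(a_1, a_2)/|a|$; in particular $(\partial_{x_2} f, \partial_{y_2} f)(0, a) \neq (0,0)$, so $D_{(0,a)} f \neq 0$. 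I do not anticipate any serious obstacle --- the only temptation to avoid is the brute-force direct computation at a generic $(w_1, w_2)$, which is feasible but cumbersome; the reduction via $\autd$-invariance makes the non-vanishing essentially immediate.
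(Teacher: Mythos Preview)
Your proposal is correct, and it takes a genuinely different route from the paper. The paper proceeds by brute force: it writes out all four partial derivatives $\partial_{x_1}f,\partial_{y_1}f,\partial_{x_2}f,\partial_{y_2}f$ explicitly, supposes they vanish simultaneously at some point $\underline{\alpha}$, and recasts this as a homogeneous linear system in $(c_1,d_1,c_2,d_2)$ with a nontrivial solution. Computing the determinant of the coefficient matrix and setting it to zero yields (after substitution) the identity $|z_1|^2+|z_2|^2=2$, contradicting $\underline{\alpha}\in\D\times\D$. Your approach instead exploits the invariance $f\circ F_\psi=f$ under the diagonal $\autd$-action to move any point to the normal form $(0,a)$, where the non-vanishing of the gradient is immediate from $f(0,z_2)=|z_2|$. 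Your argument is shorter, more conceptual, and meshes well with the rest of the paper, which already uses this transitivity of the diagonal action (e.g., in Theorem~\ref{theorem-0}); the paper's direct computation, while heavier, has the minor virtue of giving the partial derivatives explicitly, though these formulas are not used elsewhere.
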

\begin{proof}
	
	Since $T_{f(\underline{w})} (0,1)$ is one dimensional, $(i)$ implies $(ii)$. So let us prove $(i)$. For $\underline{z}=(x_1, y_1,x_2,y_2)\in \D \times\D -D$, we have \begin{align}\label{defn-of-f}
	f(x_1, y_1,x_2,y_2)=|\varphi_{z_1} (z_2)|
	=\frac{\sqrt{(x_1 -x_2)^2+(y_1 -y_2)^2}}{\sqrt{(1- x_1 x_2 -y_1 y_2)^2+(x_1 y_2 -x_2 y_1)^2}}.
	\end{align}
	Note that $f(z_1, z_2)=0$ if and only if $z_1 =z_2$. Now differentiating (\ref{defn-of-f}) we get
	\begin{align}
	&\frac{\partial_{x_1}f(x_1, y_1,x_2,y_2) }{f(x_1, y_1,x_2,y_2)}=\Bigg\{\frac{x_1 -x_2}{(x_1 -x_2)^2+(y_1 -y_2)^2}+\frac{x_2(1- x_1 x_2 -y_1 y_2) -y_2 (x_1 y_2 -x_2 y_1)}{(1- x_1 x_2 -y_1 y_2)^2+(x_1 y_2 -x_2 y_1)^2}\Bigg\},\label{differential-of-f_x_1}\\
	&\frac{\partial_{y_1}f(x_1, y_1,x_2,y_2) }{f(x_1, y_1,x_2,y_2)}=\Bigg\{\frac{y_1 -y_2}{(x_1 -x_2)^2+(y_1 -y_2)^2}+\frac{y_2(1- x_1 x_2 -y_1 y_2) +x_2 (x_1 y_2 -x_2 y_1)}{(1- x_1 x_2 -y_1 y_2)^2+(x_1 y_2 -x_2 y_1)^2}\Bigg\},\label{differential-of-f_y_1}\\
	&\frac{\partial_{x_2}f(x_1, y_1,x_2,y_2) }{f(x_1, y_1,x_2,y_2)}=\Bigg\{\frac{-(x_1 -x_2)}{(x_1 -x_2)^2+(y_1 -y_2)^2}+\frac{x_1(1- x_1 x_2 -y_1 y_2) +y_1 (x_1 y_2 -x_2 y_1)}{(1- x_1 x_2 -y_1 y_2)^2+(x_1 y_2 -x_2 y_1)^2}\Bigg\},\label{differential-of-f_x_2}\\
	&\frac{\partial_{y_2}f(x_1, y_1,x_2,y_2) }{f(x_1, y_1,x_2,y_2)}=\Bigg\{\frac{-(y_1 -y_2)}{(x_1 -x_2)^2+(y_1 -y_2)^2}+\frac{y_1(1- x_1 x_2 -y_1 y_2) -x_1 (x_1 y_2 -x_2 y_1)}{(1- x_1 x_2 -y_1 y_2)^2+(x_1 y_2 -x_2 y_1)^2}\Bigg\}.\label{differential-of-f_y_2}
	\end{align}
	We have to show that $D_w f =\big(\partial_{x_1}f,\partial_{y_1}f,\partial_{x_2}f,\partial_{y_2}f\big)(\underline{w})\neq 0$ for all $\underline{w} \in \D\times \D-D$.
	Suppose on the contrary, there is a point $\underline{\alpha}=(c_1, d_1 ,c_2,d_2) \in \D\times \D -D$ such that $D_{\underline{\alpha} }f =0$. Clearly $\underline{\alpha}\neq 0$.
	Let \[f(\underline{\alpha})^2 (1- c_1 c_2 -d_1 d_2)=a\,\, \text{and}\,\, f(\underline{\alpha})^2 (c_1 d_2 -c_2 d_1)=b.\]
	Note that for $\underline{z}=(z_1,z_2)=(x_1, y_1,x_2,y_2)\in \D \times \D -D$  we have the following \begin{align*}
	& \partial_{x_1}f(x_1, y_1,x_2,y_2) =0 \\
	&\textit{or,}\,\,\frac{x_1 -x_2}{(x_1 -x_2)^2+(y_1 -y_2)^2}+\frac{x_2(1- x_1 x_2 -y_1 y_2) -y_2 (x_1 y_2 -x_2 y_1)}{(1- x_1 x_2 -y_1 y_2)^2+(x_1 y_2 -x_2 y_1)^2}\ =0\\
	%&\textit{or,}\,\, x_1 - x_2 + \big((x_1 -x_2)^2+(y_1 -y_2)^2 \big)\frac{x_2(1- x_1 x_2 -y_1 y_2) -y_2 (x_1 y_2 -x_2 y_1)}{(1- x_1 x_2 -y_1 y_2)^2+(x_1 y_2 -x_2 y_1)^2}\ =0 \\
	%&\textit{or,}\,\, x_1 - x_2 + x_2|\varphi_{z_1}(z_2)|^2 (1- x_1 x_2 -y_1 y_2) - y_2 |\varphi_{z_1}(z_2)|^2   (x_1 y_2 -x_2 y_1) =0\\
	%&\textit{or,}\,\, x_1 - x_2 + x_2f(\underline{z})^2 (1- x_1 x_2 -y_1 y_2) - y_2f(\underline{z})^2   (x_1 y_2 -x_2 y_1) =0  \\
	&\textit{or,}\,\, x_1 + 0\cdot y_1 + \big(f(\underline{z})^2 (1- x_1 x_2 -y_1 y_2)  -1\big)x_2 + \big(-f(\underline{z})^2   (x_1 y_2 -x_2 y_1)\big)y_2=0.
	\end{align*}
	Considering the remaining partial derivatives, we can derive analogous equations. So from the equations, (\ref{differential-of-f_x_1}), (\ref{differential-of-f_y_1}), (\ref{differential-of-f_x_2}) and (\ref{differential-of-f_y_2}), we obtain that $D_{\underline{\alpha} }f =0$ if and only if $(x_1, y_1,x_2,y_2)=(c_1, d_1 ,c_2,d_2)$ is a nontrivial solution of the system of equations
	\begin{align}\label{system-of-equations}
	& x_1 +0\cdot y_1 + (a-1)x_2 +(-b)y_2=0\\
	& 0\cdot x_1 + y_1 + b x_2 + (a-1)y_2=0\\
	&(a-1)x_1 +b y_1 +x_2 + 0\cdot y_2=0\\
	&(-b)x_1 + (a-1)y_1 +0\cdot x_2 + y_2 =0.
	\end{align}
	This gives us that the coefficient matrix of the system of these equations has zero determinant. So we get \[2a = a^2 + b^2.\]
	Now recall that \[f(\underline{\alpha})^2 (1- c_1 c_2 -d_1 d_2)=a\,\, \text{and}\,\, f(\underline{\alpha})^2 (c_1 d_2 -c_2 d_1)=b.\]
	Using these values, we obtain
	\[2=|c_1|^2 +|d_1|^2 + |c_2|^2 + |d_2|^2.\]
	But this is a contradiction, because $(c_1,d_1,c_2,d_2)\in \D \times \D$. Hence
	\[D_{\underline{w}} f =\big(\partial_{x_1}f,\partial_{y_1}f,\partial_{x_2}f,\partial_{y_2}f\big)(\underline{w})\neq 0 \text{ for all } \underline{w} \in \D\times \D-D.\]
	This completes the proof.
\end{proof}
The next lemma gives a geometric structure of $\D \times \D- D$ and from this lemma we derive an analogous result on the symmetrized bidisc.
\begin{lemma}\label{Lem3}
	Let $f:\D \times \D \rightarrow [0,1)$ be given by $f(\underline{z})=f(z_1,z_2)=|\varphi_{z_1} (z_2)|$. Then we have the following.
	\begin{enumerate}
		\item $f|_{\D \times \D -D} : \D \times \D -D \rightarrow (0,1)$ is a submersion.
		
		\item $f$ defines a three dimensional foliation $\mathscr{F}$ of $\D \times \D -D$ where the leaves are $\mathscr{F}_a =f^{-1}\{a\}, a\in (0,1)$.
		
		\item Each leaf of $\mathscr{F} =\{\mathscr{F}_a: a\in (0,1)\}$ is a real 3-manifold.
		
		\item For each leaf $\mathscr{F}_a$ of $\mathscr{F}$, the action of $\mathbb{Z}_2$ induces a free properly discontinuous action on $\mathscr{F}_a$.
	\end{enumerate}
\end{lemma}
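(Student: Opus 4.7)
The plan is to derive part $(i)$ immediately from Lemma~\ref{Lem2}$(ii)$: by definition a smooth map between manifolds is a submersion precisely when its differential is surjective at every point, and that is exactly what Lemma~\ref{Lem2}$(ii)$ asserts for $f$ on $\D\times\D-D$. Parts $(ii)$ and $(iii)$ then fall out of the submersion theorem. At each $\underline{w}\in\D\times\D-D$ there exist local coordinates in which $f$ becomes projection onto the last coordinate; the slices $\{x_4=\mathrm{const}\}$ in these charts give a foliation atlas for the partition $\{\mathscr{F}_a=f^{-1}(a):a\in(0,1)\}$, producing the three-dimensional foliation in $(ii)$. Since every $a\in(0,1)$ is a regular value of $f$ by $(i)$, the regular value theorem identifies each $\mathscr{F}_a$ as an embedded codimension-one submanifold of the real $4$-manifold $\D\times\D-D$, hence a real $3$-manifold, which is $(iii)$.

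For $(iv)$ there are three things to check. First, the $\mathbb{Z}_2$-action preserves each leaf $\mathscr{F}_a$: this is immediate from $f(\underline{z})=f((-1)\underline{z})$, which is Lemma~\ref{Lem1}$(iii)$. Second, the restricted action on $\mathscr{F}_a$ is free: the fixed locus of $\mathbb{Z}_2$ in $\D\times\D$ is $D$ by Lemma~\ref{Lem1}$(i)$, and $f\equiv 0$ on $D$, so $\mathscr{F}_a\cap D=\emptyset$ for $a>0$ and no point of $\mathscr{F}_a$ is fixed. Third, the action is properly discontinuous on $\mathscr{F}_a$: Lemma~\ref{Lem1}$(iv)$ already gives this on the ambient $\D\times\D-D$, and a properly discontinuous action restricts to a properly discontinuous action on any invariant subset.

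No step poses a serious obstacle once Lemmas~\ref{Lem1} and~\ref{Lem2} are available; the statement is essentially a synthesis of those two lemmas with the submersion theorem, the regular value theorem, and standard facts about restrictions of group actions. The only point requiring any vigilance is the routine observation that the leaves are disjoint from the fixed locus $D$, and this is evident from $f|_D\equiv 0$ together with $a>0$.
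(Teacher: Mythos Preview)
Your approach is essentially identical to the paper's: both invoke Lemma~\ref{Lem2} for part~$(1)$, the submersion/regular value machinery for $(2)$ and $(3)$, and Lemma~\ref{Lem1} for $(4)$. There is, however, one point you skip that the paper does address. The submersion theorem yields a foliation whose leaves are the \emph{connected components} of the level sets $f^{-1}\{a\}$, not automatically the full level sets. Part~$(2)$ asserts that the leaves are precisely $\mathscr{F}_a=f^{-1}\{a\}$, so you must verify that each fibre is connected. The paper does this by observing $f^{-1}\{a\}=\{(\varphi(a),\varphi(0)):\varphi\in\autd\}$, which is visibly path-connected since $\autd$ is. Without this check, your argument for $(2)$ is incomplete: a disconnected fibre would be a union of several leaves, contradicting the claimed description.
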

\begin{proof}
	
	$(1)$ From Lemma \ref{Lem2} we have $D_{\underline{w}} f =\big(\partial_{x_1}f,\partial_{y_1}f,\partial_{x_2}f,\partial_{y_2}f\big)(\underline{w})\neq 0$ for all $\underline{w} \in \D\times \D-D$. So $f|_{\D \times \D -D} : \D \times \D -D \rightarrow (0,1)$ is a submersion.
	
	$(2)$ Using Example 1 at page 23 in \cite{Camacho}, we find that $f$ defines a three dimensional foliation of $\D\times \D -D$ where the leaves are the connected components of $f^{-1} \{a\}, a\in (0,1)$. Now $f^{-1} \{a\}= \{(\varphi(a), \varphi(0)):\varphi \in \autd\}$. This is clearly path-connected, hence connected.
	
	$(3)$ By Lemma \ref{Lem2}, $D_{\underline{w}} f : T_{\underline{w}} (\D\times \D-D)\rightarrow T_{f(\underline{w})} (0,1)$ is onto for all  $\underline{w} \in \D\times \D-D$ and $f:\D\times \D \rightarrow [0,1)$ is surjective. By the Preimage Theorem in \cite{Guillemin} (page 21), each point of $(0,1)$ is a regular value for $f$ and $f^{-1} \{a\}=\mathscr{F}_a$ is a three dimensional submanifold of $\D\times \D -D$.
	
	$(4)$ Under the $\mathbb{Z}_2$ action, $\mathscr{F}_a$ is invariant because $f(\underline{z})=f((-1)\cdot \underline{z})$. Since the $\mathbb{Z}_2$ action is free on $\D\times \D -D$, the $\mathbb{Z}_2$ action  is free on $\mathscr{F}_a$ as well. Now, $\mathscr{F}_a$ is Hausdorff. So as in Lemma \ref{Lem1}, $\mathbb{Z}_2$ acts properly discontinuously on $\mathscr{F}_a$.
\end{proof}
We have reached the main result of this subsection.
\begin{theorem}\label{theorem-2}
	Consider the map $q: \G \rightarrow [0,1)$ defined by $q(z_1 +z_2, z_1 z_2)= |\varphi_{z_1} (z_2)|$. Then we have the following results.
	\begin{enumerate}
		\item  $q|_{\G -\Delta} : \G -\Delta \rightarrow (0,1)$ is a submersion.
		
		\item $q$ defines a three dimensional foliation $\mathscr{L}$ of $\G -\Delta$, where the leaves are $\mathscr{L}_a= q^{-1}\{a\}, a\in (0,1)$.
		
		\item Each leaf of $\mathscr{L}=\{\mathscr{L}_a: a\in (0,1)\}$ is a real 3-manifold.
		
		\item For each $a\in (0,1)$, $\mathscr{L}_a =\mathscr{F}_a\big/ \mathbb{Z}_2$.
	\end{enumerate}
\end{theorem}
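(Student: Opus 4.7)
The plan is to push the earlier Lemma \ref{Lem3} (which handles the analogous statements upstairs in $\D\times\D-D$) down through the symmetrization map $sym$, exploiting the factorization $f = q\circ sym$ together with the identification $\G = \D\times\D/\mathbb{Z}_2$.

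First I would set the stage by observing that by Lemma \ref{Lem1}, the $\mathbb{Z}_2$-action on $\D\times\D-D$ is free and properly discontinuous, so the quotient map $sym:\D\times\D-D\to\G-\Delta$ is a smooth $2$-to-$1$ covering map, hence a local diffeomorphism. With this in hand, each part of the theorem follows from the corresponding part of Lemma \ref{Lem3}:

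For (1), fix $\underline{w}\in \D\times\D-D$ and let $(s,p)=sym(\underline{w})\in \G-\Delta$. From $f=q\circ sym$ and the chain rule, $D_{\underline{w}}f = D_{(s,p)}q\circ D_{\underline{w}}sym$. Since $D_{\underline{w}}sym$ is an isomorphism and $D_{\underline{w}}f$ is surjective onto $T_{f(\underline{w})}(0,1)$ by Lemma \ref{Lem3}(1), $D_{(s,p)}q$ is surjective as well. Because $sym$ is surjective onto $\G-\Delta$, this gives surjectivity at every point, so $q|_{\G-\Delta}$ is a submersion. Part (3) is then immediate by the Preimage Theorem (exactly as in Lemma \ref{Lem3}(3)): every $a\in(0,1)$ is a regular value, so $\mathscr{L}_a=q^{-1}\{a\}$ is a real $3$-submanifold of $\G-\Delta$. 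For (2), since $q$ is a submersion on a connected $4$-manifold with $1$-dimensional target, the same reference to Camacho (Example 1, p.~23) used earlier produces a $3$-dimensional foliation whose leaves are the connected components of $q^{-1}\{a\}$; connectedness of each level set follows either from path-connectedness of the corresponding $\autg$-orbit (Lemma \ref{proposition-1}, since $q^{-1}\{a\}=[(a,0)]$) or from the surjection $\mathscr{F}_a\to q^{-1}\{a\}$ applied to the connected $\mathscr{F}_a$ of Lemma \ref{Lem3}.

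For (4), the key identity is $sym^{-1}(\mathscr{L}_a)=\mathscr{F}_a$, which is just the equality $f=q\circ sym$ read fiberwise. Because $sym$ is constant on $\mathbb{Z}_2$-orbits and separates distinct orbits in $\D\times\D-D$, its restriction to $\mathscr{F}_a$ is precisely the quotient map $\mathscr{F}_a\to \mathscr{F}_a/\mathbb{Z}_2$, and the image is $\mathscr{L}_a$. By Lemma \ref{Lem3}(4) the $\mathbb{Z}_2$-action on $\mathscr{F}_a$ is free and properly discontinuous, so the quotient is a smooth manifold and the induced bijection $\mathscr{F}_a/\mathbb{Z}_2\to\mathscr{L}_a$ is in fact a diffeomorphism (it is a smooth bijective local diffeomorphism, since $sym$ is so on $\D\times\D-D$).

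The main obstacle I anticipate is the bookkeeping in (2): one must verify that $q^{-1}\{a\}$ is genuinely a single leaf (i.e.\ connected), not a disjoint union of components. The cleanest way around this is to identify $q^{-1}\{a\}$ with the equivalence class $[(a,0)]$ from Theorem \ref{theorem-0}, whose path-connectedness is already established in Lemma \ref{proposition-1}; once this observation is in place, the foliation statement drops out of the submersion theorem without further work.
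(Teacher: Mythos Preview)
Your proposal is correct and follows essentially the same route as the paper: both push Lemma~\ref{Lem3} down through $sym$ via the factorization $f=q\circ sym$, use the chain rule with the fact that $sym|_{\D\times\D-D}$ is a local diffeomorphism to get the submersion, invoke the same Camacho reference for the foliation, and appeal to Theorem~\ref{theorem-0} and Lemma~\ref{proposition-1} for connectedness of the fibres. The only cosmetic difference is that the paper verifies $sym$ is a local diffeomorphism by computing its real Jacobian determinant as $|z_1-z_2|^2$, whereas you deduce it from the free properly discontinuous $\mathbb{Z}_2$-action recorded in Lemma~\ref{Lem1}; these are equivalent observations.
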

\begin{proof}
	
	$(1)$ We have $q\circ sym=f$ where $f$ is given in Lemma \ref{Lem3} and $sym$ is defined in Section \ref{Geometry of G}. For any $\underline{z}=(z_1,z_2)=(x_1,y_1,x_2,y_2)\in \D\times \D -D$, the determinant of the real Jacobian of $sym$ at $(x_1,y_1,x_2,y_2)$ has the value $|z_1 -z_2|^2$. Since $(z_1,z_2)\in \D\times \D -D$, $z_1 \neq z_2$. So on $\D\times \D -D$, $sym$ is a local diffeomorphism. By chain rule, we have
	\[D_{sym(\underline{z})} q = D_{\underline{z}} f \circ \big(D_{\underline{z}}\,sym\big)^{-1}.\]
	Since both of $D_{\underline{z}} f$ and $D_{\underline{z}}\,sym$ are surjective for $\underline{z}\in \D\times \D -D$, we have $q: \G -\Delta \rightarrow (0,1)$ is a submersion.
	
	$(2)$ Using the submersion $q: \G -\Delta \rightarrow (0,1)$, arguments similar to Lemma \ref{Lem3} give us a three dimensional foliation $\mathscr{L}$ of $\G -\Delta$. The leaves are the connected components of $q^{-1}\{a\}, a\in (0,1)$. Now by Theorem \ref{theorem-0}, $q^{-1}\{a\}= [(a,0)]$ and by Lemma \ref{proposition-1} the equivalence classes $[(a,0)], a\in (0,1)$ are path connected. So each of $q^{-1}\{a\}, a\in (0,1)$ is connected. Hence the leaves are $q^{-1}\{a\}, a\in (0,1)$.
	
	$(3)$ For any $\underline{w}\in \G -\Delta$
	\[D_{\underline{w}} q : T_{\underline{w}} \big(\G -\Delta \big)\rightarrow T_{q(\underline{w})}\big(0,1\big)\]
	is surjective and $q: \G -\Delta \rightarrow (0,1)$ is also surjective. So each point of $(0,1)$ is a regular value for $q$ and $\mathscr{L}_1 = q^{-1} \{a\}$ is a three dimensional submanifold of $\G -\Delta$.
	
	$(4)$ All we need to do is to note that $\mathscr{L}_a = sym({\mathscr{F}_a})$ is a leaf in $\big(\D \times \D -D\big)\big/\mathbb{Z}_2=\G -\Delta$  and hence $\mathscr{L}_a= \mathscr{F}_a\big/\mathbb{Z}_2$. \end{proof}

\subsection{The leaves are diffeomorphic}\label{Property of the leaves}

For $\varphi_1, \varphi_2 \in \autd$ consider the automorphism
\[\Phi_{(\varphi_1, \varphi_2)} (z_1,z_2)=(\varphi_1(z_1), \varphi_2(z_2))\]
of the bidisc. It is well-known, see for example \cite{Rudin  poly}, that
\[Aut(\D \times \D)=\{\Phi_{(\varphi_1, \varphi_2)}, \Phi_{(\varphi_1, \varphi_2)\circ \sigma}: \varphi_1, \varphi_2\in \autd \},\]
where $\sigma :\D\times \D \rightarrow \D\times \D$ sends $(z_1,z_2)$ to $(z_2,z_1)$. The proper closed subgroup $G_\D =\{\Phi_{(\varphi, \varphi)}: \varphi\in \autd \}$ of $Aut(\D \times \D)$ does not act transitively on $\D \times \D$ and hence we consider the  equivalence relation $\sim^\prime$ on $\D \times \D$ by declaring
\begin{align*}
(z_1,z_2)\sim^\prime (w_1,w_2)\,\, \textit{if and only if there is a}\,\, \varphi\in \autd\,\, \textit{such that}\,\, (\varphi(z_1), \varphi(z_2)) = (w_1,w_2).
\end{align*}
Each equivalence class contains exactly one element of the form $(a,0)$ for some $a\in [0,1)$. Recall the map $f:\D \times \D  \rightarrow [0,1)$ from the statement of Lemma \ref{Lem3} defined by $f(z_1, z_2)=|\varphi_{z_1} (z_2)|$. Since $\mathscr F_a = f^{-1}(a)$ for all $a$ in the open interval $(0,1)$, we have, by definition of $f$,
\begin{align*}
\mathscr{F}_a
&= [(a,0)]^\prime\,\,(\textit{the $\sim^\prime$ equivalence class containing $(a,0)$})\\
&=\{\Phi_\varphi (a,0): \varphi\in \autd\} (\textit{denoting $\Phi_{(\varphi,\varphi)}$ by $\Phi_\varphi$ for brevity}).
\end{align*}
We want to point out the fact that given any $a$ in the open interval $(0,1)$ and any $\underline{z}\in \mathscr{F}_a$, there is exactly one $\varphi\in \autd$ such that $\Phi_\varphi(a,0)= \underline{z}$. So we are allowed to define a map $Q_a : f^{-1}\{a\}\rightarrow \autd$ sending $\Phi_\varphi(a,0)$ to $\varphi$. The following lemma shows that $Q_a$ is a diffeomorphism.
\begin{lemma}\label{theorem leaves and autD are diffeo}
	$Q_a : \mathscr{F}_a\rightarrow \autd$ is a diffeomorphism.
\end{lemma}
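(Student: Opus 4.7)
My plan is to exhibit an explicit smooth two-sided inverse $\Psi \colon \autd \to \mathscr{F}_a$ to $Q_a$. Parametrize $\autd$ by $(\theta, \alpha) \in (\mathbb{R}/2\pi\mathbb{Z}) \times \D$ via
\[\varphi_{\theta, \alpha}(z) \;=\; e^{i\theta}\,\frac{\alpha - z}{1 - \bar\alpha z},\]
as already used in Lemma \ref{proposition-1}; this identifies $\autd$ with the smooth $3$-manifold $S^{1} \times \D$. In these coordinates,
\[\Psi(\theta, \alpha) \;=\; \Phi_{\varphi_{\theta, \alpha}}(a, 0) \;=\; \left(\,e^{i\theta}\,\frac{\alpha - a}{1 - \bar\alpha a},\; e^{i\theta}\alpha\,\right),\]
which is visibly smooth as a map into $\D \times \D$, and hence into the embedded submanifold $\mathscr{F}_{a}$ by Lemma \ref{Lem3}\,(3). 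The paragraph preceding the statement of the lemma already shows that $\Psi$ is bijective onto $\mathscr{F}_{a}$, with set-theoretic inverse $Q_a$.

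The core step is to write down $Q_a$ explicitly and observe that it too is smooth. Setting $(w_1, w_0) = \Psi(\theta, \alpha)$, the equation $w_0 = e^{i\theta} \alpha$ gives $\alpha = e^{-i\theta} w_0$ (so $\bar\alpha = e^{i\theta}\bar w_0$); substituting into the expression for $w_1$ and clearing denominators, a short algebraic manipulation yields
\[e^{i\theta} \;=\; \frac{w_0 - w_1}{a\,(1 - \bar w_0 w_1)}, \qquad \alpha \;=\; e^{-i\theta}\, w_0.\]
Since $|\bar w_0 w_1| < 1$ on $\D \times \D$, both right-hand sides are smooth functions of $(w_1, w_0)$ there. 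Two consistency checks are needed: that the denominator is nonzero (just noted) and that $|e^{i\theta}| = 1$. The latter is the one place where the hypothesis $(w_1, w_0) \in \mathscr{F}_{a}$ is actually used: by definition of the leaf this condition says $|w_1 - w_0| = a\,|1 - \bar w_{1} w_{0}| = a\,|1 - \bar w_0 w_1|$, which instantly gives $|e^{i\theta}| = 1$. Hence the displayed formulas genuinely define a smooth map $\mathscr{F}_a \to S^{1} \times \D \cong \autd$, which by construction coincides with $Q_a$.

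Since $Q_a$ and $\Psi$ are mutually inverse smooth maps, $Q_a$ is a diffeomorphism. The only mild obstacle in this argument is the unit-modulus verification for $e^{i\theta}$: it is precisely where one must invoke that $(w_1, w_0)$ lies on the specific level set $\mathscr{F}_a$ rather than an arbitrary point of $\D \times \D$, and it reflects the Möbius-invariance of $f$ that underlies the whole foliation picture.
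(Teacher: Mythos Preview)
Your argument is correct and follows essentially the same strategy as the paper's proof: both compute $Q_a$ and its inverse in the $(\theta,\alpha)$ coordinates on $\autd$ and verify smoothness. The paper packages this as defining an atlas $\mathscr{A}'=\{(V_j,\mu_j)\}$ on $\mathscr{F}_a$ with $\mu_j(\Phi_\varphi(a,0))=\psi_j(\varphi)$ and then checking the chart compositions are smooth, whereas you write down the explicit formulas
\[
e^{i\theta}=\frac{w_0-w_1}{a(1-\bar w_0 w_1)},\qquad \alpha=e^{-i\theta}w_0
\]
directly and verify (using the defining equation of the leaf) that they land in $S^1\times\D$. Your version has the modest advantage that it works manifestly with the submanifold structure on $\mathscr{F}_a$ furnished by Lemma~\ref{Lem3}(3), so no compatibility check between atlases is tacitly needed; but the underlying computation is the same.
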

\begin{proof}
	Clearly $Q_a$ is bijective. All we need to show is that $Q_a$ and $Q^{-1} _a$ are smooth. To do that, first let us define two atlases on $\autd$ and $\mathscr{F}_a$.
	
	Let $U_1 =\{\varphi_{\theta, \alpha}: \theta \in (-\pi, \pi), \alpha \in \D\}$ and $U_2 =\{\varphi_{\theta, \alpha}: \theta \in (0, 2\pi), \alpha \in \D\}$, where $\varphi_{\theta, \alpha} (z)= e^{i \theta}\frac{z-\alpha}{1- \overline{\alpha}z}$. We consider two maps $\psi_1 : U_1 \rightarrow (-\pi, \pi)\times \D$ and $\psi_2 : U_2 \rightarrow (0, 2\pi)\times \D$, defined by $\psi_j (\varphi_{\theta, \alpha})=(\theta, \alpha), j=1,2$. Then $\mathscr{A}=\Big\{\Big(U_1,\psi_1\Big),\Big(U_2,\psi_2\Big)\Big\}$ is a smooth atlas on $\autd$ giving it a structure of a smooth real 3-manifold (see \cite{Agler Lykova Young}).
	
	Now consider $V_j = \{\Phi_\varphi (a,0): \phi \in U_j\},j=1,2$. Also define $\mu_1 : V_1 \rightarrow (-\pi, \pi)\times \D$ and $\mu_2 : V_2 \rightarrow (0, 2\pi)\times \D$ by $\mu_j (\Phi_\varphi ) =\psi_j (\varphi), j=1,2$. Note that $Q_a (V_j)=U_j,j=1,2$. Clearly $\mathscr{A}^\prime=\Big\{\Big(V_1,\mu_1\Big),\Big(V_2,\mu_2\Big)\Big\}$ makes $\mathscr{F}_a$ a smooth real 3-manifold. Now a little computation gives us $\psi_j\circ Q_a \circ \mu^{-1} _i$ and $\mu_i \circ Q_a ^{-1}  \circ \psi^{-1} _j\,\, (i,j=1,2)$ are smooth functions. Hence $Q_a$ is a diffeomorphism.
\end{proof}

For every fixed $a$ in the interval $(0,1)$, we can consider the quotient map
\[sym_{\mathscr{F}_a} :\mathscr{F}_a\rightarrow \mathscr{F}_a\big/\mathbb{Z}_2(=\mathscr{L}_a \text{ by Theorem \ref{theorem-2}}).\]
In other words, $sym_{\mathscr{F}_a}=sym|_{\mathscr{F}_a}$. For every fixed $a$ in $(0,1)$, there is also an action of $\mathbb{Z}_2$ on $\autd$ defined by
\begin{align*}
(+ 1)\cdot \varphi =\varphi \text{ and } (-1)\cdot \varphi =\varphi \circ \varphi_a
\end{align*}
where \[\varphi_a (z)= \frac{a-z}{1 -\overline{a }z}= \varphi_{\pi, a}.\]
This free and and properly discontinuous action leads to the quotient manifold $\autd\big/_a \mathbb{Z}_2$ where we have retained the symbol $a$ to emphasize the significance of the number $a$ in $(0,1)$. Now take the quotient map
\begin{align*}
sym_{\autd} : \autd \rightarrow \autd\big/_a \mathbb{Z}_2.
\end{align*}

By the quotient manifold theorem (Theorem 21.10 in \cite{Lee Smooth}), $sym_{\mathscr{F}_a}$ and $sym_{\autd}$ are smooth submersions. With this in our hand, we state our next result.

\begin{lemma}\label{Diffeo b/w quotients}
	For each $a\in (0,1)$ there is a diffeomorphism $\tilde{Q}_a :\mathscr{L}_a \rightarrow \autd\big/_a \mathbb{Z}_2.$
\end{lemma}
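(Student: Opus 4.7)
The plan is to show that the diffeomorphism $Q_a : \mathscr{F}_a \to \autd$ from Lemma \ref{theorem leaves and autD are diffeo} intertwines the $\mathbb{Z}_2$-action on $\mathscr{F}_a$ (inherited from the swap action on $\D\times\D$) with the twisted $\mathbb{Z}_2$-action on $\autd$ defined via $\varphi_a$, and then to descend to the quotients using the universal property of smooth submersions.

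First I would verify that $Q_a$ is $\mathbb{Z}_2$-equivariant. Any $\underline{z}\in\mathscr{F}_a$ is, by the definition of $Q_a$, uniquely of the form $\underline{z}=\Phi_\varphi(a,0)=(\varphi(a),\varphi(0))$ with $\varphi\in\autd$, so $Q_a(\underline{z})=\varphi$. The nontrivial element of $\mathbb{Z}_2$ sends $\underline{z}$ to $(\varphi(0),\varphi(a))$. Using the two key identities $\varphi_a(a)=0$ and $\varphi_a(0)=a$, one computes
\[\Phi_{\varphi\circ\varphi_a}(a,0)=\bigl((\varphi\circ\varphi_a)(a),(\varphi\circ\varphi_a)(0)\bigr)=(\varphi(0),\varphi(a)),\]
so $(-1)\cdot\underline{z}=\Phi_{\varphi\circ\varphi_a}(a,0)$ and therefore $Q_a((-1)\cdot\underline{z})=\varphi\circ\varphi_a=(-1)\cdot Q_a(\underline{z})$. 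This is exactly the equivariance that forces the $\varphi_a$-twist in the action on $\autd$.

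Next, the composite $sym_{\autd}\circ Q_a : \mathscr{F}_a \to \autd\bigl/_a\mathbb{Z}_2$ is smooth and, by equivariance, constant on the fibres of the smooth submersion $sym_{\mathscr{F}_a}:\mathscr{F}_a\to\mathscr{L}_a$. The standard passage-to-quotients result for smooth submersions (a direct consequence of the quotient manifold theorem already cited in the paper) produces a unique smooth map $\tilde{Q}_a:\mathscr{L}_a\to\autd\bigl/_a\mathbb{Z}_2$ satisfying $\tilde{Q}_a\circ sym_{\mathscr{F}_a}=sym_{\autd}\circ Q_a$. Since $Q_a$ is a bijection and the actions are free, $\tilde{Q}_a$ is bijective. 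Applying the identical argument to $Q_a^{-1}$, which is equivariant for the same reason that $Q_a$ is, supplies a smooth two-sided inverse, so $\tilde{Q}_a$ is a diffeomorphism.

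The only piece of genuine content is the short equivariance calculation; everything else is a formal descent from the already-established Lemma \ref{theorem leaves and autD are diffeo} via the universal property of quotients. I expect this equivariance check to be the main (and essentially only) step, and it simultaneously explains why the $\mathbb{Z}_2$-action on $\autd$ had to be defined by right-composition with $\varphi_a$ rather than by something simpler.
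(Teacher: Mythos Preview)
Your proof is correct and follows essentially the same route as the paper: both arguments observe that $sym_{\autd}\circ Q_a$ is constant on the fibres of $sym_{\mathscr{F}_a}$ (the paper simply states these fibres are $\{\Phi_\varphi(a,0),\Phi_{\varphi\circ\varphi_a}(a,0)\}$, while you supply the explicit equivariance calculation justifying this), then descend via the universal property of smooth submersions and repeat with $Q_a^{-1}$ to obtain the smooth inverse. Your explicit verification that $(-1)\cdot\underline{z}=\Phi_{\varphi\circ\varphi_a}(a,0)$ is the only elaboration beyond the paper's own argument.
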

\begin{proof}
	Observe that $sym_{\autd}\circ Q_a$ is constant on the fibers of $sym_{\mathscr{F}_a}$ which are precisely $\{\Phi_\varphi (a,0), \Phi_{\varphi\circ \varphi_a}(a,0)\}$. So Theorem 4.30 in \cite{Lee Smooth} gives us a smooth map $J:\mathscr{L}_a \rightarrow \autd\big/_a \mathbb{Z}_2$ and the following commutative diagram:
	\[
	\xymatrix{
		& \mathscr{F}_a \ar[d]^{sym_{\autd}\circ Q_a} \ar@{->}[dl]_-{sym_{\mathscr{F}_a}} \\
		\mathscr{L}_a \ar[r]^{J\,\,\,\,\,\,\,\,\,\,\,\,\,} &\autd\big/_a \mathbb{Z}_2,
	}
	\]
	i.e., $J\circ sym_{\mathscr{F}_a} = sym_{\autd}\circ Q_a$.
	
	It is also clear that $sym_{\mathscr{F}_a} \circ Q^{-1}_a$ is constant on the fibers of $sym_{\autd}$ which are $\{\varphi, \varphi\circ \varphi_a\}$. So there is a smooth map $H : \autd\big/_a \mathbb{Z}_2 \rightarrow \mathscr{L}_a$ such that the diagram
	\[
	\xymatrix{
		& \autd \ar[d]^{sym_{\mathscr{F}_a} \circ Q^{-1}_a} \ar@{->}[dl]_-{sym_{\autd}} \\
		\autd\big/_a \mathbb{Z}_2 \ar[r]^{\,\,\,\,\,\,\,\,\,H} &\mathscr{L}_a
	}
	\]
	is commutative, i.e., $sym_{\mathscr{F}_a} \circ Q^{-1}_a = H\circ sym_{\autd}$.
	It is easy to see that $H=J^{-1}$. If we write $J= \tilde{Q}_a$, then it is our required diffeomorphism. This completes the proof.
\end{proof}

We know that the orbits of the action of the automorphism group on the symmetrized bidisc are given by the collection $\{\mathscr{L}_a: a\in [0,1)\}$ where $\mathscr{L}_0 = \Delta$. The indexing set $[0,1)$ corresponds to the line $\{(a,0): a\in [0,1)\}$ in $\G$. For each $a\in (0,1)$, $(a,0)$ is fixed by $H_{\varphi_0}$ ($\varphi_0$ is the identity map) and $H_{\varphi_a}$. So the collection of the automorphisms fixing the elements of $\{(a,0): a\in (0,1)\}$ varies with $a$. Now we shall exhibit an indexing set which is easier to deal with. We start with the following Lemma.
\begin{lemma}\label{Different Indexing set}
	For each $a\in (0,1)$, there is a unique $b\in (0,1)$ such that $[(a,0)]=[(0,-b^2)]$. Moreover, the map sending $a$ to $b$ is a diffeomorphism of $(0,1)$.	
\end{lemma}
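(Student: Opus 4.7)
The plan is to reduce the claim to a single-variable equation coming from the submersion $q\colon \G\to[0,1)$ of Theorem \ref{theorem-2}, and then observe that the resulting map is smooth with smooth inverse.

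First I note that by Theorem \ref{theorem-0}, the equivalence classes are exactly the fibres of $q$, so $[(a,0)]=[(0,-b^2)]$ if and only if $q(a,0)=q(0,-b^2)$. Clearly $q(a,0)=a$. For the other side, observe that $(0,-b^2)=sym(b,-b)$, hence
\[
q(0,-b^2)=|\varphi_{b}(-b)|=\left|\frac{b-(-b)}{1-b\cdot(-b)}\right|=\frac{2b}{1+b^2}.
\]
Therefore the assertion $[(a,0)]=[(0,-b^2)]$ is equivalent to the single scalar equation
\[
a=\frac{2b}{1+b^2}.
\]

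Next I would show this equation has a unique solution $b\in(0,1)$ for every $a\in(0,1)$. Writing it as $ab^2-2b+a=0$ and solving yields
\[
b=\frac{1-\sqrt{1-a^2}}{a}=\frac{a}{1+\sqrt{1-a^2}},
\]
where I have discarded the other root $(1+\sqrt{1-a^2})/a>1$. The remaining value visibly lies in $(0,1)$ for $a\in(0,1)$, which gives existence and uniqueness of $b$.

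Finally, for smoothness, let $\psi\colon(0,1)\to(0,1)$ be $\psi(b)=\frac{2b}{1+b^2}$. Then $\psi$ is $C^\infty$ with
\[
\psi'(b)=\frac{2(1-b^2)}{(1+b^2)^2}>0\qquad(b\in(0,1)),
\]
so $\psi$ is a strictly increasing smooth bijection of $(0,1)$ onto itself, i.e.\ a diffeomorphism. The map $a\mapsto b$ is precisely $\psi^{-1}$, hence a diffeomorphism of $(0,1)$.

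There isn't really a hard step here; the only thing to notice is the useful presentation $(0,-b^2)=sym(b,-b)$, which turns the seemingly two-dimensional problem of comparing $\autg$-orbits into the one-dimensional computation of $q$ on the distinguished transversal $\{(a,0):a\in[0,1)\}$ from Theorem \ref{theorem-0}.
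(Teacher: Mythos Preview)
Your proof is correct and lands on the same formula $b=\dfrac{a}{1+\sqrt{1-a^2}}$ as the paper, but the framing differs. The paper writes down the map $h(a)=\dfrac{a}{1+\sqrt{1-a^2}}$ at the outset, checks directly that for $b=h(a)$ one has $\varphi_b(0)=b$ and $\varphi_b(a)=-b$, and hence exhibits the explicit automorphism $H_{\varphi_b}$ carrying $(a,0)$ to $(0,-b^2)$; uniqueness is then argued separately. You instead use that the orbits are exactly the fibres of $q$ (this is Theorem~\ref{theorem-2}(2) rather than Theorem~\ref{theorem-0} per se, though the content is the same) and reduce everything to the scalar equation $a=\dfrac{2b}{1+b^2}$, from which existence, uniqueness and the diffeomorphism statement all fall out at once via the monotone map $\psi(b)=\dfrac{2b}{1+b^2}$. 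Your route is a bit more conceptual and packages uniqueness and smoothness together; the paper's route is a bit more constructive in that it names the automorphism witnessing the equivalence. Neither is harder or longer than the other.
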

\begin{proof}
	Define $h: (0,1)\rightarrow (0,1)$ by
	\begin{align*}
	h(a)= \frac{a}{1+ \sqrt{1 -a^2}}.
	\end{align*}
	It is clearly invertible and is a diffeomorphism. Now for $b=\frac{a}{1+ \sqrt{1 -a^2}}$ we have
	\begin{align*}
	\frac{b-a}{1- ab} =-b.
	\end{align*}
	So the automorphism $\varphi_b (z)=\frac{b-z}{1- zb}$ of $\D$ sends $0$ to $b$ and $a$ to $-b$. Hence $H_{\varphi_b} (a,0)= (0,-b^2)$. So $[(a,0)]=[(0,-b^2)]$. Now it is easy to see that for $a\in (0,1)$, if $(0,-c^2)\in [(a,0)]$ for some $c\in (0,1)$, then $c=\frac{a}{1+ \sqrt{1 -a^2}}$.\\
	This completes the proof.
\end{proof}

Thus, the collection of the orbits can be written as $\{[(0,-b^2)]: b\in [0,1)\}$. An interesting fact is that for any $b\in (0,1)$, we have $\{H_\varphi \in \autg: H_\varphi (0,-b^2)=(0,-b^2) \}= \{ H_{\varphi_0}, H_{-\varphi_0}\}$, where $\varphi_0$ is the identity in $\autd$. Consider the function $f_1 : \D \times \D \rightarrow [0,1)$, given by
\[f_1 (z_1, z_2)=\frac{|\varphi_{z_1}(z_2)|}{1+ \sqrt{1- |\varphi_{z_1}(z_2)|^2}}.\]
By Lemmae \ref{Lem3} and \ref{Different Indexing set}, $f_1$ gives a foliation of $\D \times \D -D$. For $a\in [0,1)$ and $b=  a/(1+ \sqrt{1 -a^2})$, we have
\[ f_1 ^{-1} \{b\} = f ^{-1} \{a\} = [(a,0)]^\prime = [(-b,b)]^\prime = G_\D \{(-b,b)\}.\]

The discussion so far shows us that for any $a\in (0,1)$, there is a diffeomorphism from $\mathscr{F}_a$ to $\autd$ that sends $\Phi_\varphi(-b,b)$ to $\varphi$ where $b=  a/(1+ \sqrt{1 -a^2})$. With this in our hand, we consider an action of $\mathbb{Z}_2$ on $\autd$ by
\begin{align*}
(+1)\cdot \varphi = \varphi \text{ and } (-1) \cdot \varphi = \varphi \circ (-\varphi_0)
\end{align*}
where $\varphi_0$ is the identity function in $\autd$. This action is free and properly discontinuous. Let us write $\autd\big/_0 \mathbb{Z}_2$ for the quotient space. Clearly, the quotient map $sym_0 : \autd \rightarrow \autd\big/_0 \mathbb{Z}_2$ is a smooth submersion. The same procedure used in the proof of Lemma \ref{Diffeo b/w quotients} gives us that $\LL_a$ is diffeomorphic with $ \autd\big/_0 \mathbb{Z}_2$. As a consequence of this conclusion, we have the following result.
\begin{theorem}
	For any $c \in (0,1)$, $\LL_c$ and $ \autd\big/_c \mathbb{Z}_2$ are diffeomorphic with $ \autd\big/_0 \mathbb{Z}_2$.
\end{theorem}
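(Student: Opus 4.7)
The plan is to exploit Lemma~\ref{Diffeo b/w quotients}, which already supplies $\LL_c \cong \autd\big/_c\mathbb{Z}_2$, and then to produce a second diffeomorphism $\LL_c \cong \autd\big/_0\mathbb{Z}_2$; composing the two will automatically yield $\autd\big/_c\mathbb{Z}_2 \cong \autd\big/_0\mathbb{Z}_2$, completing both halves of the theorem. The second diffeomorphism will be built by running the proofs of Lemmae~\ref{theorem leaves and autD are diffeo} and~\ref{Diffeo b/w quotients} verbatim but with the base point $(c,0)$ replaced by $(-b,b)$, where $b = c/(1+\sqrt{1-c^2})$ is supplied by Lemma~\ref{Different Indexing set}. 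The whole reason this reparametrization helps is that the isotropy of $(c,0)$ in $G_\D$ depends on $c$, whereas the isotropy of $(-b,b)$ is $\{\Phi_{\pm\varphi_0}\}$ for every $c$; it is precisely this $c$-independence of the stabilizer that makes the target $\autd\big/_0\mathbb{Z}_2$ the same for all leaves.

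Concretely, I would first define $Q'_c : \mathscr{F}_c \to \autd$ by $\Phi_\varphi(-b,b) \mapsto \varphi$ and verify, using the atlases already constructed in the proof of Lemma~\ref{theorem leaves and autD are diffeo}, that $Q'_c$ is a smooth local diffeomorphism that is $2$-to-$1$. Next I would identify the coordinate swap on $\mathscr{F}_c$ with the action $\varphi \mapsto \varphi\circ(-\varphi_0)$ on $\autd$; the single calculation required is
\[
\Phi_{\varphi\circ(-\varphi_0)}(-b,b) \;=\; \bigl(\varphi(b),\varphi(-b)\bigr),
\]
which is exactly the swap of $\Phi_\varphi(-b,b) = \bigl(\varphi(-b),\varphi(b)\bigr)$. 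Since both $sym_{\mathscr{F}_c}$ and $sym_0$ are smooth submersions by the Quotient Manifold Theorem, Theorem~4.30 of \cite{Lee Smooth} then descends $Q'_c$ to a smooth bijection $\tilde{J}_c : \LL_c \to \autd\big/_0\mathbb{Z}_2$ with smooth inverse, in exact analogy with the commutative diagrams of Lemma~\ref{Diffeo b/w quotients}.

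The main obstacle, in my view, is the bookkeeping around the two $\mathbb{Z}_2$-actions: one must check that the pointwise intertwining displayed above is in fact natural in $\varphi$ on the entire orbit $\mathscr{F}_c$, and that the $\mathbb{Z}_2$-action $\varphi\mapsto\varphi\circ(-\varphi_0)$ is free and properly discontinuous on $\autd$ so that $\autd\big/_0\mathbb{Z}_2$ is a smooth manifold and the projection is a submersion. The naturality is immediate from the boxed identity, since it holds as functions of $\varphi$ with no further dependence on $c$; and the freeness and proper discontinuity of the $\mathbb{Z}_2$-action on $\autd$ were already recorded in the paragraph preceding the statement. Once these routine verifications are made, no new analytical input is required beyond what was already handled in Lemmae~\ref{theorem leaves and autD are diffeo} and~\ref{Diffeo b/w quotients}, and the theorem follows by composition.
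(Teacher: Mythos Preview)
Your proposal is correct and follows essentially the same route as the paper: reparametrize the leaf via the base point $(-b,b)$ supplied by Lemma~\ref{Different Indexing set}, build the analogue of $Q_a$, check the intertwining $\Phi_{\varphi\circ(-\varphi_0)}(-b,b)=(\varphi(b),\varphi(-b))$, and then descend exactly as in Lemma~\ref{Diffeo b/w quotients}. One slip to fix: the map $Q'_c:\mathscr{F}_c\to\autd$, $\Phi_\varphi(-b,b)\mapsto\varphi$, is a \emph{bijection} (hence a diffeomorphism by the same atlas argument as in Lemma~\ref{theorem leaves and autD are diffeo}), not $2$-to-$1$---the stabilizer of $(-b,b)$ in $G_\D$ is trivial, since an automorphism of $\D$ fixing the two distinct points $\pm b$ is the identity; it is precisely this bijectivity that lets you run the two commuting-diagram halves of Lemma~\ref{Diffeo b/w quotients} unchanged.
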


Later we shall see that if $c$ and $d$ are two distinct points in $(0,1)$, $\LL_c$ and $\LL_d$ are CR-nonequivalent.

\subsection{Pseudoconvexity of the three dimensional orbits}

We end the section with the result which shows that the three dimensional orbits are strongly
pseudoconvex hypersurfaces. In the next section, we shall see that the pseudoconvexity of these orbits will lead us to our main result, namely, the realization of the symmetrized bidisc.

\begin{theorem}\label{Pseudoconvexity of the orbits}
	All the three-dimensional orbits of $\mathbb{G}$ under the action of its automorphism group are strongly pseudoconvex hypersurfaces.
\end{theorem}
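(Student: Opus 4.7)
The plan is to reduce to a single-point Levi-form computation in $\D\times\D$, using two reductions. First, since $\autg$ acts transitively on each $\mathscr{L}_a$ by biholomorphisms of $\G$ (as established in Section 2.1) and strong pseudoconvexity is a biholomorphic invariant, it suffices to verify it at one point of $\mathscr{L}_a$. Second, for $a\in(0,1)$ we have $\mathscr{F}_a\cap D=\emptyset$, so $sym$ is a local biholomorphism near any point of $\mathscr{F}_a$ (its holomorphic Jacobian $z_1-z_2$ is nonzero there) and maps $\mathscr{F}_a$ onto $\mathscr{L}_a$; thus strong pseudoconvexity of $\mathscr{F}_a$ at $(z_1,z_2)$ is equivalent to strong pseudoconvexity of $\mathscr{L}_a$ at $sym(z_1,z_2)$. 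Together these reductions turn the problem into an explicit $2\times 2$ Hessian computation at one conveniently chosen point of $\mathscr{F}_a$.

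The natural choice is $(z_1,z_2)=(a,0)\in\mathscr{F}_a$, where a defining function is
\[\tilde\rho(z_1,z_2)=|z_1-z_2|^2-a^2|1-\overline{z_1}z_2|^2,\]
smooth on $\D\times\D$ with zero set $\mathscr{F}_a$. Direct first-derivative calculation gives $\partial\tilde\rho\big|_{(a,0)}=a\,dz_1-a(1-a^2)\,dz_2$, which is nonzero, so $\tilde\rho$ is a genuine defining function near $(a,0)$ and $T^{1,0}_{(a,0)}\mathscr{F}_a$ is the complex line spanned by $\xi=(1-a^2)\,\partial_{z_1}+\partial_{z_2}$.

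Splitting $\tilde\rho=A-a^2B$ with $A=(z_1-z_2)(\bar z_1-\bar z_2)$ and $B=(1-\bar z_1 z_2)(1-z_1\bar z_2)$, one computes the complex Hessians
\[\bigl(\partial_{z_j}\partial_{\bar z_k}A\bigr)=\begin{pmatrix}1&-1\\-1&1\end{pmatrix},\qquad\bigl(\partial_{z_j}\partial_{\bar z_k}B\bigr)\bigg|_{(a,0)}=\begin{pmatrix}0&-1\\-1&a^2\end{pmatrix},\]
so the Levi matrix of $\tilde\rho$ at $(a,0)$ is $\begin{pmatrix}1&-(1-a^2)\\-(1-a^2)&(1-a^2)(1+a^2)\end{pmatrix}$, and evaluating on $\xi$ collapses to
\[L(\xi,\bar\xi)=(1-a^2)^2-2(1-a^2)^2+(1-a^2)(1+a^2)=2a^2(1-a^2)>0\quad\text{for }a\in(0,1).\]
Hence $\mathscr{F}_a$ is strongly pseudoconvex at $(a,0)$, which by the two reductions propagates to strong pseudoconvexity of $\mathscr{L}_a$ everywhere.

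The main subtlety is not the algebra (which simplifies pleasantly because $z_2=0$ kills several terms in the Hessian of $B$) but the sign/orientation bookkeeping: one must check that $\tilde\rho$ has the sign required by the usual convention, i.e., is negative on the ``interior'' side of $\mathscr{F}_a$. Since $\tilde\rho(0,0)=-a^2<0$ and the origin lies in the component $\{f<a\}$ of $\D\times\D\setminus\mathscr{F}_a$, the computed Levi-positivity does correspond to strong pseudoconvexity in the standard sense, with the interior being the side containing the diagonal $D$.
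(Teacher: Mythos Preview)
Your proof is correct and takes a genuinely simpler route than the paper's. Both arguments use the same defining function $\tilde\rho=g_a$ for $\mathscr{F}_a$ and the same transfer from $\mathscr{F}_a$ to $\mathscr{L}_a$ via the local biholomorphism $sym$. The difference is that the paper carries out the Levi-form calculation at an \emph{arbitrary} point of $\mathscr{F}_a$, parametrizing points as $(\varphi(a),\varphi(0))$ with $\varphi(z)=e^{i\theta}(z-\alpha)/(1-\bar\alpha z)$ and doing roughly a page of algebra in $(a,\alpha,\theta)$ to simplify $\langle B_{\underline z}^{(a)}v_{\underline z}^{(a)},v_{\underline z}^{(a)}\rangle$, arriving at $D^{(a)}(\underline z)=2a^2|u_{\underline z}^{(a)}|$. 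You instead invoke the transitive action of $\autg$ on $\mathscr{L}_a$ (equivalently, of $G_\D$ on $\mathscr{F}_a$) together with biholomorphic invariance of the Levi form to reduce to the single point $(a,0)$, where the $z_2=0$ specialization kills most of the Hessian of $B$ and the computation collapses to $2a^2(1-a^2)$ in a few lines. (Consistently, the paper's formula at $\alpha=0$ gives $u_{\underline z}^{(a)}=1-a^2$ and hence the same value.) Your version buys a much shorter computation; the paper's version yields the exact Levi-form value at every point of $\mathscr{F}_a$, though that extra information is never subsequently used.
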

\begin{proof}
	We note that all the three-dimensional orbits are $\{ sym(\mathscr{F}_a) : a \in (0,1) \}$. Recall that
	\[
	\mathscr{F}_a = f^{-1}\{a\} = \Big\lbrace (z_1,z_2) \in \mathbb{D}\times \mathbb{D} : |\phi_{z_1}(z_2)|= \frac{|z_1-z_2|}{|1-\overline{z}_2 z_1|}
	= a \Big\rbrace.
	\]
	Therefore, a defining function for the three-dimensional hypersurface $\mathscr{F}_a$ is given by
	\[
	g_a (z_1,z_2)=|z_1-z_2|^2-a^2|1-z_1 \overline{z}_2|^2 : \mathbb{D} \times \mathbb{D} \to \mathbb{R}.
	\]
	Straightforward calculations reveal that
	
	\begin{subequations} \label{E:pds_of_g_a}
		\begin{align*}
		&\partial_{z_1}g_a = (\overline{z}_1-\overline{z}_2) + a^2 \overline{z}_2 (1-\overline{z}_1 z_2); \\
		&\partial_{z_2} g_a = -(\overline{z}_1-\overline{z}_2) + a^2 \overline{z}_1 (1-\overline{z}_2 z_1); \\
		&\partial_{\overline{z}_1 z_1} g_a = 1 - a^2 |z_2|^2; \\
		&\partial_{\overline{z}_2 z_2} g_a  = 1 - a^2 |z_1|^2; \\
		&\partial_{\overline{z}_2 z_1} g_a  = -1 + a^2 (1 - \overline{z}_1 z_2).
		\end{align*}
	\end{subequations}	
	Now note that the complex tangent space to $\mathscr{F}_a$ at an arbitrary point $\underline{z}$ is
	\[
	T^{\C}_{\underline{z}}(\mathscr{F}_a) = \{ (w_1,w_2) \in \C^2 : (\partial_{z_1} g_a)(\underline{z}) w_1 + (\partial_{z_2} g_a)
	(\underline{z}) w_2 = 0 \}.
	\]
	Let $u^{(a)}_{\underline{z}} = -(\partial_{z_2} g_a)(\underline{z})/(\partial_{z_1} g_a)(\underline{z})$ and let
	$v^{(a)}_{\underline{z}} = ( u^{(a)}_{\underline{z}},1 )^T$. Then
	$T^{\C}_{\underline{z}}(\mathscr{F}_a) = \{ \lambda v^{(a)}_{\underline{z}} : \lambda \in \C \}$. Set
	\[
	B^{(a)}_{\underline{z}} =
	\begin{bmatrix}
	(\partial_{z_1 \overline{z}_1}g_a)(\underline{z}) & (\partial_{z_2 \overline{z}_1}g_a)(\underline{z}) \\
	(\partial_{z_1 \overline{z}_2}g_a)(\underline{z}) & (\partial_{z_2 \overline{z}_2}g_a)(\underline{z})
	\end{bmatrix},
	\]
	the Levi matrix of $g_a$. We want to show that $\langle B^{(a)}_{\underline{z}}v,v \rangle > 0$ for every $v \in
	T^{\C}_{\underline{z}}(\mathscr{F}_a) \setminus \{0\}$, where $\langle \boldsymbol{\cdot} \, , \boldsymbol{\cdot} \rangle$ denotes the standard Hermitian inner
	product in $\C^2$. To do this, it is sufficient, from the form of $T^{\C}_{\underline{z}}(\mathscr{F}_a)$ mentioned above, to show that
	$\langle B^{(a)}_{\underline{z}} v^{(a)}_{\underline{z}}, v^{(a)}_{\underline{z}} \rangle > 0$.
	Now $(z_1,z_2)=(\varphi(a),\varphi(0))$ for some $\varphi \in \autd$, where $\varphi$ is given, for some $\theta \in \mathbb{R}$ and
	some $\alpha \in \D$, by
	\[
	\varphi(z) = e^{i\theta} \frac{z-\alpha}{1-\overline{\alpha}z} \quad \forall \, z \in \D.
	\]
	So
	\[ %\label{E:exp_z_1_z_2_a_alpha}
	z_1 = e^{i\theta} \frac{a-\alpha}{1-\overline{\alpha}a}, \quad z_2 = -e^{i \theta} \alpha.
	\]
	One has
	\begin{equation} \label{E:xpr_z1Minusz2_1Minusz1z2bar}
	z_1-z_2 = e^{i\theta}\frac{a(1-|\alpha|^2)}{1-\overline{\alpha}a}, \quad 1-z_1\overline{z}_2 =
	\frac{1-|\alpha|^2}{1-\overline{\alpha}a}.
	\end{equation}
	Therefore
	\begin{align}
	u^{(a)}_{\underline{z}} = - \frac{(\partial_{z_2}g_a)(\underline{z})}{(\partial_{z_1}g_a)(\underline{z})} &=
	\frac{(\overline{z}_1-\overline{z}_2)-a^2 \overline{z}_1 (1-z_1\overline{z}_2)}{(\overline{z}_1-\overline{z}_2)
		+a^2 \overline{z}_2 (1-\overline{z}_1z_2)} \notag \\
	&=\frac{ e^{-i\theta}a\frac{1-|\alpha|^2}{1-a\alpha} - a^2e^{-i\theta}\frac{a-\overline{\alpha}}{1-a\alpha}
		\frac{1-|\alpha|^2}{1-a\overline{\alpha}} }{ e^{-i\theta}a\frac{1-|\alpha|^2}{1-a\alpha} - a^2e^{-i\theta}\overline{\alpha}
		\frac{1-|\alpha|^2}{1-a\alpha} } \quad (\text{using } \eqref{E:xpr_z1Minusz2_1Minusz1z2bar}) \notag \\
	&=\frac{1 - a \frac{a-\overline{\alpha}}{1-a\overline{\alpha}} }{1 - a \overline{\alpha}} \notag \\
	&= \frac{1-a^2}{(1-a\overline{\alpha})^2}. \label{E:xpr_u_a_z}
	\end{align}
	Now let $D^{(a)}(\underline{z}) =
	\big\langle B^{(a)}_{\underline{z}} v^{(a)}_{\underline{z}} , v^{(a)}_{\underline{z}} \big\rangle$. Then
	\begin{align}
	D^{(a)}(\underline{z}) &= (\partial_{z_1 \overline{z}_1}g_a)(\underline{z})|u^{(a)}_{\underline{z}}|^2 + (\partial_{\overline{z}_1
		z_2}g_a)(\underline{z}) \overline{ u^{(a)}_{\underline{z}} } + (\partial_{z_1 \overline{z}_2}g_a)(\underline{z})
	u^{(a)}_{\underline{z}} + (\partial_{z_2 \overline{z}_2}g_a)(\underline{z}) \notag \\
	&= (1 - |az_2|^2) |u^{(a)}_{\underline{z}}|^2 - (1 - a^2(1 - z_1\overline{z}_2)) \overline{u^{(a)}_{\underline{z}}}
	- (1 - a^2(1 - \overline{z}_1z_2)) u^{(a)}_{\underline{z}} + (1 - |az_1|^2), \label{E:xpr_D}
	\end{align}
	using the expressions for the partial derivatives of $g_a$ computed earlier. Also, using the expressions for $z_1$, $z_2$,
	$1-z_1\overline{z}_2$ and $u^{(a)}_{\underline{z}}$ in terms of $a,\alpha$ and $\theta$, we obtain that $|az_2|^2=|a\alpha|^2$ and
	\[
	1-|az_1|^2 = \frac{1-a^2}{|1-a\overline{\alpha}|^2} (1+a^2-a\overline{\alpha}-a\alpha).
	\]
	Also,
	\begin{align}
	(1 - a^2 (1 - \overline{z}_1z_2))u^{(a)}_{\underline{z}} &= \frac{1-a\alpha-a^2+|a\alpha|^2}{1-a\overline{\alpha}}
	\frac{1-a^2}{|1-a\alpha|^2} \notag \\
	&= \frac{ (1-a\alpha) (1-a\overline{\alpha}) - a(a-\overline{\alpha}) }{ 1-a\overline{\alpha} }
	\frac{1-a^2}{|1-a\alpha|^2} \notag \\
	&= \left( 1 - a\alpha - a \frac{a-\overline{\alpha}}{1-a\overline{\alpha}} \right) \frac{1-a^2}{|1-a\alpha|^2}.
	\label{E:one_minus_a_z1barz2}
	\end{align}
	Hence
	\begin{equation} \label{E:one_minus_a_z1z2bar}
	(1 - a^2 (1 - z_1\overline{z}_2))\overline{u^{(a)}_{\underline{z}}} = \Big( 1 - a\overline{\alpha} -a\frac{a-\alpha}{1-a\alpha}
	\Big) \frac{1-a^2}{|1-a\overline{\alpha}|^2}.
	\end{equation}
	So from \eqref{E:xpr_D}, \eqref{E:one_minus_a_z1barz2} and \eqref{E:one_minus_a_z1z2bar} we get
	\begin{align*}
	D^{(a)}(\underline{z})&= (1-|a\alpha|^2) |u^{(a)}_{\underline{z}}|^2 - \Big( 1-a\alpha-a
	\frac{a-\overline{\alpha}}{1-a\overline{\alpha}} \Big) |u^{(a)}_{\underline{z}}|\\ & - \Big( 1-a\overline{\alpha}-a
	\frac{a-\alpha}{1-a\alpha} \Big) |u^{(a)}_{\underline{z}}| + |u^{(a)}_{\underline{z}}|(1+a^2-a\overline{\alpha}-a\alpha).
	\end{align*}
	Therefore, first dividing the above equation throughout by $|u^{(a)}_{\underline{z}}|$ and then substituting the known expression for
	$u^{(a)}_{\underline{z}}$ into the resulting right hand side, we get, after some computations,
	\begin{align}
	\frac{D^{(a)}(\underline{z})}{|u^{(a)}_{\underline{z}}|} &= (1-|a\alpha|^2) \frac{1-a^2}{|1-a\overline{\alpha}|^2} - (1-a^2)
	+ a \left( \frac{a-\overline{\alpha}}{1-a\overline{\alpha}} + \frac{a-\alpha}{1-a\alpha} \right) \notag \\
	&= \frac{1-a^2}{|1-a\overline{\alpha}|^2} (a\overline{\alpha}+a\alpha-2|a\alpha|^2) + \frac{a}{|1-a\overline{\alpha}|^2} (2a
	-a^2(\alpha+\overline{\alpha})-(\alpha+\overline{\alpha})+2a|\alpha|^2).
	\end{align}
	Therefore
	\begin{align}
	\frac{|1-a\overline{\alpha}|^2 D^{(a)}(\underline{z})}{|u^{(a)}_{\underline{z}}|} &=(1-a^2)
	(a\overline{\alpha}+a\alpha-2|a\alpha|^2) + a (2a - a^2(\alpha+\overline{\alpha}) - (\alpha+\overline{\alpha}) + 2a|\alpha|^2 )
	\notag \\
	&= 2a^2(1-a\alpha-a\overline{\alpha}+|a\alpha|^2) = 2a^2|1-a\alpha|^2.
	\end{align}
	Hence $D^{(a)}(\underline{z})=2a^2|u^{(a)}_{\underline{z}}| > 0$ (recall that $|a\alpha|<1$, which allows us to cancel
	$|1-a\alpha|^2$ from both sides), so that, %from the fact that
	%$T^{\C}_{\underline{z}}(\mathscr{F}_a) = \text{span}_{\C}(u^{(a)}_{\underline{z}})$, we immediately obtain the strong pseudoconvexity
	by our previous remarks, we can conclude that the real hypersurface $\mathscr{F}_a$ is strongly pseudoconvex.
	Now recall that $sym: \D \times \D - D \to \G -
	\Delta$ is a local biholomorphism (in fact, a 2-sheeted holomorphic covering map), and that it is a surjection from the hypersurface
	$\mathscr{F}_a$ to the hypersurface $\LL_a$. Therefore, by the biholomorphic invariance of the Levi form, it follows that
	$\LL_a$ is also strongly pseudoconvex, as required.
\end{proof}

In this section, we saw that the action of the automorphism group on the symmetrized bidisc foliates it into strongly pseudoconvex three dimensional hypersurfaces with one exception. A search for domains with these properties brings to the fore a classical domain first studied by Cartan \cite{Cartan} and elaborated in the next section.

\section{Biholomorphism between $\G$ and $\mathcal D_1$}\label{D_1 and G}

The geometry of the symmetrized bidisc studied so far shows that it is a 2-dimensional Kobayashi-hyperbolic complex manifold with 3-dimensional automorphism group whose properly discontinuous action foliates $\G$ into orbits all, except one, of which are 3-dimensional strongly pseudoconvex hypersurfaces with the exceptional one being a complex curve. This brings us to Isaev's classification in \cite{Isaev} of all connected 2-dimensional Kobayashi-hyperbolic complex manifolds having 3-dimensional automorphism groups. Amongst the model spaces introduced there are $\mathcal{D}_{s,t}$ and $\mathcal{D}_s$, the definitions of which we reproduce below:
\[
\mathcal{D}_{s,t}  = \{ (z,w) \in \C^2 : s |1+z^2-w^2| < 1+|z|^2-|w|^2 < t|1+z^2-w^2|, \, Im(z(1+\overline{w})) > 0 \},
\]
where $1 \leq s < t \leq\infty$, with the understanding that if $t=\infty$, then $\mathcal{D}_{s,t}$ does not contain the complex curve
\[
\{ (z,w) \in \C^2 : 1+z^2-w^2=0, \, Im(z(1+\overline{w})) > 0 \}.
\]
Furthermore,
\begin{align*}
\mathcal{D}_s &= \{ (z,w) \in \C^2 : s |1+z^2-w^2| < 1+|z|^2-|w|^2, \, Im(z(1+\overline{w})) > 0 \}\\
&=\{ (z,w) \in \C^2 : |1+z^2-w^2| <\frac{1}{s} (1+|z|^2-|w|^2), \, Im(z(1+\overline{w})) > 0 \}
\end{align*}
where $1 \leq s \leq \infty$. We point out two facts
(see (9) of Section~2 in \cite{Isaev}):
\begin{enumerate}
	\item The automorphism group of each $\mathcal{D}_{s,t}$ and $\mathcal{D}_{s}$ is $SO(2,1)^0$,
	which acts on it in the following way:
	\begin{align*}
		\begin{pmatrix}
			a_{11} &a_{12} &a_{13}\\
			a_{21} &a_{22} &a_{23}\\
			a_{31} &a_{32} &a_{33}
		\end{pmatrix} \cdot (z_1, z_2) = \frac{\begin{pmatrix}
				a_{21} +a_{22}z_1 +a_{23}z_2\\
				a_{31} +a_{32}z_1 + a_{33}z_2
		\end{pmatrix}}{
			a_{11} +a_{12} z_1+ a_{13}z_2}
	\end{align*}
	for $\begin{pmatrix}
	a_{11} &a_{12} &a_{13}\\
	a_{21} &a_{22} &a_{23}\\
	a_{31} &a_{32} &a_{33}
	\end{pmatrix} \in SO(2,1)^0$ and $(z_1, z_2)\in \mathcal{D}_s$ or $\mathcal{D}_{s,t}$.
	
	\item The orbits of the action of $Aut(\mathcal{D}_s)$ on $\mathcal{D}_s$ are the pairwise
	CR-nonequivalent strongly pseudoconvex hypersurfaces $\eta_{c}$, $c \in (0,1/s)$, along with the complex curve $\eta_0$, where
	\begin{align*}
		&\eta_0 = \{(z_1,z_2)\in \mathbb{C}^2 :1+ z_1 ^2 -z_2 ^2=0, Im(z_1 (1+\overline{z_2}))>0 \}\,\,\text{and}\\
		& \eta_c =\{(z_1,z_2)\in \mathbb{C}^2 : |1+ z_1 ^2 -z_2 ^2|=c(1+|z_1|^2-|z_2|^2), Im(z_1 (1+\overline{z_2}))>0\}.
	\end{align*}
\end{enumerate}

These sets $\eta_c$ were first mentioned by E. Cartan \cite{Cartan}.

First, we shall show that $\G$ is biholomorphic with
\begin{align}\label{DefnD1}
\mathcal{D}_1 = \{(z_1,z_2)\in \mathbb{C}^2 : 1+|z_1|^2-|z_2|^2>|1+ z_1 ^2 -z_2 ^2|, Im(z_1 (1+\overline{z_2}))>0\}.
\end{align}
This space is mentioned in \cite{Isaev_n_n2-1} as well, where it is stated that $\mathcal{D}_1$ is contained in the space $\mathcal{H}=\{(z_1,z_2)\in \mathbb{C}^2:Im(z_1)>0, z_2 \notin (-\infty , -1]\cup [1, \infty)\}$. For the sake of completeness and for our future reference, we shall state it as a Lemma and give a quick proof.

\begin{lemma}\label{D_1UpperHPlane}
	$\mathcal{D}_1 \subset \mathcal{H}$. If $(z_1,z_2)\in \mathcal{D}_1$, then $(z_1,0)\in \mathcal{D}_1$.
\end{lemma}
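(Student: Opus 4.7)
The plan is to translate both defining inequalities into real coordinates and exploit a single algebraic identity. Write $z_1 = x_1+iy_1$ and $z_2 = x_2+iy_2$. The key computation, which I would do first, is to expand
\[
(1+|z_1|^2-|z_2|^2)^2 - |1+z_1^2-z_2^2|^2 = 4\bigl(y_1^2 - y_2^2 - (\operatorname{Im}(z_1\overline{z_2}))^2\bigr),
\]
obtained by grouping the terms into $(2|z_1|^2-z_1^2-\overline{z_1}^2)$, $(2|z_2|^2-z_2^2-\overline{z_2}^2)$ and the mixed block $z_1^2\overline{z_2}^2 + \overline{z_1}^2 z_2^2 - 2|z_1 z_2|^2$. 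Assuming $(z_1,z_2)\in\mathcal{D}_1$, the first defining inequality, together with the positivity of $1+|z_1|^2-|z_2|^2$, lets us square both sides and conclude
\[
y_1^2 > y_2^2 + (\operatorname{Im}(z_1\overline{z_2}))^2.
\]
In particular $|y_1| > |\operatorname{Im}(z_1\overline{z_2})|$.

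Next, I would unpack the second defining inequality. A direct expansion gives $\operatorname{Im}(z_1(1+\overline{z_2})) = y_1 + \operatorname{Im}(z_1\overline{z_2})$, so the hypothesis reads $y_1 > -\operatorname{Im}(z_1\overline{z_2})$. Combining this with $|y_1| > |\operatorname{Im}(z_1\overline{z_2})|$ and splitting into the two sign cases for $\operatorname{Im}(z_1\overline{z_2})$, the alternative $y_1 < -|\operatorname{Im}(z_1\overline{z_2})|$ is ruled out in both cases, leaving $y_1 > 0$. This is the only delicate step; the rest is bookkeeping.

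To finish $\mathcal{D}_1\subset\mathcal{H}$, suppose towards contradiction that $z_2\in(-\infty,-1]\cup[1,\infty)$, so $y_2=0$ and $x_2^2\ge 1$. The displayed inequality becomes $y_1^2 > y_1^2 x_2^2$, i.e.\ $y_1^2(1-x_2^2)>0$, which contradicts $y_1>0$ and $x_2^2\ge 1$. Hence $z_2\notin(-\infty,-1]\cup[1,\infty)$ and $\mathcal{D}_1\subset\mathcal{H}$.

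For the second assertion, setting $z_2=0$ in the definition of $\mathcal{D}_1$ requires checking $1+|z_1|^2 > |1+z_1^2|$ and $\operatorname{Im}(z_1) > 0$. The latter is immediate from the part just proved. For the former, a one-line computation gives $(1+|z_1|^2)^2 - |1+z_1^2|^2 = 2|z_1|^2 - 2\operatorname{Re}(z_1^2) = 4y_1^2 > 0$, as needed. The only real obstacle in the whole argument is the sign analysis that yields $y_1 > 0$; once that is in hand, both conclusions drop out.
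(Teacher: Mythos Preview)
Your argument is correct and follows essentially the same route as the paper: both proofs rest on the identity $(1+|z_1|^2-|z_2|^2)^2 - |1+z_1^2-z_2^2|^2 = 4\bigl(y_1^2 - y_2^2 - (\operatorname{Im}(z_1\overline{z_2}))^2\bigr)$ (the paper writes it in polar form) and then combine it with the second defining inequality to force $y_1>0$. The only cosmetic differences are that you work in Cartesian rather than polar coordinates and that you exclude both intervals $(-\infty,-1]$ and $[1,\infty)$ uniformly via the squared inequality, whereas the paper dispatches $(-\infty,-1]$ directly from the condition $\operatorname{Im}(z_1(1+\overline{z_2}))>0$.
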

\begin{proof}
	Let $(z_1,z_2)\in \mathcal{D}$. Then we have
	\begin{align}
	&1+|z_1|^2-|z_2|^2>|1+ z_1 ^2 -z_2 ^2| \label{D_1first} \;\; \text{  and }\\
	& Im(z_1 (1+\overline{z_2}))>0.\label{D_1second}
	\end{align}
	Clearly, $1+|z_1|^2>|1+ z_1 ^2|$. Let $z_1 = re^{i\theta}$ and $z_2 =te^{i\phi}$. So from (\ref{D_1first}) and (\ref{D_1second}), we get
	\begin{align}
	& r^2 sin^2 \theta > t^2 sin^2 \phi + r^2 t^2 sin^2 (\phi -\theta)\label{D11}\,\, \text{ and } \\
	& sin\theta + t sin(\phi -\theta)>0 \label{D12}.
	\end{align}
	If $sin\theta \leq 0$, then (\ref{D12}) contradicts (\ref{D11}). So $Im(z_1)>0$ and hence $(z_1,0)\in \mathcal{D}_1$.\\
	Now if $z_2 \in (-\infty ,-1]$, then it contradicts (\ref{D_1second}), and if $z_2 \in [1,\infty)$, then it contradicts (\ref{D_1first}).\\
	This completes the proof.
\end{proof}
We now prove that the symmetrized bidisc is biholomorphically equivalent to the unbounded domain $\mathcal{D}_1$.
\begin{theorem}\label{Biholom B/w spaces}
	$\G$ and $\mathcal{D}_1$ are biholomorphic.
\end{theorem}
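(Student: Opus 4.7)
The plan is to write down an explicit biholomorphism and verify it directly. A natural candidate is
\[
\Phi(s,p) \;=\; \left( \frac{i(1+p)}{1-p},\; \frac{-is}{1-p} \right), \qquad (s,p) \in \G.
\]
The motivation for this guess is to first match the two exceptional orbits $\Delta = \{(2z,z^2):z\in\D\}\subset\G$ and $\eta_0 \subset \mathcal{D}_1$. Parameterizing $\eta_0$ by the Joukowski-type map $U \mapsto \bigl((U-U^{-1})/2,\,(U+U^{-1})/2\bigr)$ that solves $1 + W_1^2 - W_2^2 = 0$, and taking $U = i(1-z)/(1+z)$ (the Cayley image of $z$), the parameterization of $\eta_0$ extends to a symmetric rational formula in the elementary symmetric functions $s = z_1+z_2$ and $p = z_1 z_2$, and that formula is precisely the $\Phi$ above.

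Once $\Phi$ is written down, I would organize the proof into three short steps. First, well-definedness and holomorphy are immediate: on $\G$ we have $|p| < 1$, so $1 - p \ne 0$. Second, to see that $\Phi(\G) \subset \mathcal{D}_1$, substitute $s = z_1+z_2$, $p = z_1 z_2$ with $z_1,z_2 \in \D$ and clear the positive denominator $|1-p|^2$ in each of the two defining inequalities of $\mathcal{D}_1$. The $\mathrm{Im}$-condition reduces to $1 - |p|^2 > 0$, while the inequality $1 + |\Phi_1|^2 - |\Phi_2|^2 > |1+\Phi_1^2 - \Phi_2^2|$ reduces, using the identity $s^2 - 4p = (z_1-z_2)^2$, to
\[
2 + 2|p|^2 - |s|^2 \;>\; |z_1 - z_2|^2,
\]
which in turn is equivalent to the elementary $(1-|z_1|^2)(1-|z_2|^2) > 0$.

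Third, I would invert $\Phi$ algebraically, obtaining
\[
p \;=\; \frac{W_1 - i}{W_1 + i}, \qquad s \;=\; \frac{-2W_2}{W_1 + i}.
\]
By Lemma \ref{D_1UpperHPlane}, $\mathcal{D}_1 \subset \mathcal{H}$ forces $\mathrm{Im}(W_1) > 0$, which ensures $W_1 + i \ne 0$ and yields $|p| < 1$ (since $|W_1 - i| < |W_1 + i|$ in the upper half-plane). To close surjectivity, I would show the resulting $(s,p)$ lies in $\G$ by reversing the algebra of the second step: the first $\mathcal{D}_1$-inequality gives $(1-|z_1|^2)(1-|z_2|^2) > 0$, and $|p| = |z_1 z_2| < 1$ rules out the parasitic case $|z_1|,|z_2| > 1$, so both $z_j \in \D$.

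The main obstacle is guessing the correct $\Phi$; once the formula is in hand (guided by the Joukowski parameterization of $\eta_0$ together with the Cayley transform appearing naturally via Lemma \ref{D_1UpperHPlane}), every subsequent verification collapses to the single inequality $(1-|z_1|^2)(1-|z_2|^2) > 0$, and the inverse is produced by a two-line algebraic manipulation.
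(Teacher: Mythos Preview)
Your map $\Phi$ is exactly the paper's map $F$, and your overall architecture (explicit formula, algebraic inversion via the Cayley transform using Lemma~\ref{D_1UpperHPlane}, then ruling out the parasitic root configuration) matches the paper's proof. The handling of the first defining inequality of $\mathcal D_1$ is correct: clearing $|1-p|^2$ gives $2+2|p|^2-|s|^2>|s^2-4p|$, and substituting $s=z_1+z_2$, $p=z_1 z_2$ this is exactly $2(1-|z_1|^2)(1-|z_2|^2)>0$. Your surjectivity step is also fine and in fact slightly cleaner than the paper's, since $|p|<1$ together with $(1-|z_1|^2)(1-|z_2|^2)>0$ immediately forces both roots into $\D$.

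There is, however, one genuine error. The claim that ``the $\mathrm{Im}$-condition reduces to $1-|p|^2>0$'' is false. A direct computation gives
\[
\mathrm{Im}\!\left(\Phi_1(1+\overline{\Phi_2})\right)=\frac{(1-|p|^2)-\mathrm{Im}\big(\bar s(1+p)\big)}{|1-p|^2},
\]
so after clearing the denominator the condition is $1>|p|^2+\mathrm{Im}(\bar s+p\bar s)$, not $1>|p|^2$. The extra term does not vanish in general (take $s=-ir$, $p=0$ with $0<r<1$: the condition becomes $1-r>0$, not $1>0$). The paper closes this by observing $\mathrm{Im}(\bar s+p\bar s)=\mathrm{Im}(p\bar s-s)\le|s-\bar s p|$ and invoking the Agler--Young characterization $|s-\bar s p|+|p|^2<1$ of $\G$. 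Alternatively, one can substitute $s=z_1+z_2$, $p=z_1 z_2$ directly and use $|\mathrm{Im}(z_j)|\le|z_j|$ to get the factored lower bound $(1-|z_1|)(1-|z_2|)(1-|z_1||z_2|)>0$. Either way, an additional argument beyond $|p|<1$ is required; as written, your verification that $\Phi(\G)\subset\mathcal D_1$ is incomplete at this step.
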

\begin{proof}
	To motivate the proof, it is worthwhile considering the complex curve
	\[\eta_0 = \{(z_1,z_2)\in \mathbb{C}^2 :1+ z_1 ^2 -z_2 ^2=0, Im(z_1 (1+\overline{z_2}))>0 \}.\] If $(z_1,z_2)\in \eta_0$, then $z_1$ lies in the upper half plane. We know that $z\mapsto i\frac{1+z}{1-z}$ is a biholomorphic function that maps $\D$ onto the upper half plane. So by Lemma \ref{D_1UpperHPlane} there is a point $p^\prime\in\D$ such that \[z_1 = i\frac{1+p^\prime}{1-p^\prime}\,\,\text{and}\,\,z_2 ^2 =-\frac{4p^\prime}{(1-p^\prime)^2}.\]
	Setting $p^\prime=z^2$ and $z_2 = -i\frac{2z}{1-z^2}$ gives us that the map \[z\mapsto \Big(i\frac{1+z^2}{1-z^2},
	-i\frac{2z}{1-z^2}\Big) \]
	is a biholomorphism from $\D$ onto $\eta_0$.
	
	Motivated by the above, consider the map $F: \G\rightarrow \mathbb{C}^2$ defined by
	\begin{align}\label{DefnOfF}
	F(s,p)=\Big(i \frac{1+p}{1-p}, -i \frac{s}{1-p}\Big).
	\end{align}
	Clearly, this map is injective and holomorphic. For $(s,p)\in \G$, there exist $z_1, z_2 \in \D$ such that $(s,p)=(z_1 +z_2, z_1 z_2)$. So we have
	\begin{align*}
	\frac{|1 + \big(\frac{i+ip}{1-p}\big)^2 - \big(-\frac{is}{1-p}\big)^2|}{1 + |\frac{i+ip}{1-p} |^2 - |-\frac{is}{1-p} |^2}= \frac{|\varphi_{z_1}(z_2)|^2}{2- |\varphi_{z_1}(z_2)|^2}\in [0,1).
	\end{align*}
	Also $Im\big(\frac{i+ip}{1-p} \big(1+\overline{\big(-\frac{is}{1-p}\big)}\big)\big)>0$ if and only if $1> |p|^2 + Im(p\overline{s}+\overline{s})$. Since $Im(p\overline{s}+\overline{s})=Im(p\overline{s}-s)$ and $(s,p)$ satisfies $1> |p|^2 + |p\overline{s}-s|$ (see Theorem 2.1 in \cite{A-Y} or Theorem 7.13 in \cite{J-F-Invariant}), we have that $F$ maps $\G$ into $\mathcal{D}_1$.\\
	To prove surjectivity, take a point $(u,v)\in \mathcal{D}_1$. By Lemma \ref{D_1UpperHPlane}, $Im (u)>0$. So there is a unique $q\in \D$ such that $u= \frac{i+iq}{1-q}$. Choose $t\in \C$ so that $v= -\frac{it}{1-q}$. By (\ref{D_1first}) and (\ref{D_1second}), we have
	\begin{align}\label{ConnectionWithG}
	\frac{|t^2 - 4q|}{2(1+ |q|^2)- |t|^2} \in [0,1)\,\,\text{and}\,\, 1> |q|^2 + Im(\overline{t} + p\overline{t}).
	\end{align}
	Now there is a unique set $\{w_1,w_2\}\subset\C$ satisfying $t= w_1 +w_2$ and $q= w_1 w_2$. Since $|q|<1$, we may assume that $|w_1|<1$. From (\ref{ConnectionWithG}) we get $\frac{|\varphi_{w_1}(w_2)|^2}{2- |\varphi_{w_1}(w_2)|^2}\in [0,1)$. So $\varphi_{w_1}(w_2)\in \D$ and hence $w_2 \in \D$. Thus $(t,q)\in \G$ and $F(t,q)= (u,v)$.\\
	The inverse of $F$ is easy to compute and is clearly holomorphic. So this completes the proof.
\end{proof}
This leads to a characterization theorem. We start by following Isaev and call a connected two-dimensional Kobayashi-hyperbolic complex manifold $M$ having a real three-dimensional group of holomorphic automorphisms $\text{Aut}(M)$ a $(2,3)$-manifold.
\begin{theorem}\label{Main Theorem}
	Suppose $M$ is a $(2,3)$-manifold. Let $G(M)$ be that connected component of the automorphism group of $M$ which contains the identity. Suppose that all the orbits of $M$ under $G(M)$, except only one, are strongly pseudoconvex three-dimensional real hypersurfaces and that the sole remaining orbit is a complex curve. Suppose that
	there exists an $\epsilon_0 > 0$ such that for every $c \in (1-\epsilon_0,1)$, there exists a three-dimensional orbit $O$ such that
	$O$ is CR-equivalent to $\eta_{c}$. Then $M$ is biholomorphic to $\G$.
\end{theorem}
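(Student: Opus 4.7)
The plan is to combine Isaev's classification of $(2,3)$-manifolds with the explicit biholomorphism $\G \cong \mathcal{D}_1$ from Theorem \ref{Biholom B/w spaces}. The argument proceeds in three successive narrowings: first identify $M$ as one of Isaev's models, then pin down which family it belongs to using the orbit hypothesis, and finally invoke the previous theorem.

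First I would invoke Isaev's list of $(2,3)$-manifolds \cite{Isaev}. Not every model on that list admits the orbit structure hypothesised for $M$, namely one complex curve together with three-dimensional strongly pseudoconvex hypersurfaces. The domains $\mathcal{D}_{s,t}$ with $t < \infty$ cannot contain the complex curve $\eta_0$, since the defining inequality $1+|z|^2-|w|^2 < t|1+z^2-w^2|$ forces $|1+z^2-w^2|$ to be strictly positive there; and $\mathcal{D}_{s,\infty}$ explicitly excludes $\eta_0$ by definition. The other models in Isaev's classification must be ruled out either because their automorphism groups have dimension greater than three (and so they are not $(2,3)$-manifolds), or because their orbit decomposition is incompatible with the hypotheses (for instance, they possess a complex two-dimensional orbit, or a non-pseudoconvex three-dimensional orbit). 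This reduces the possibilities to the one-parameter family $\{\mathcal{D}_s : 1 \leq s \leq \infty\}$.

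Next I would pin down the parameter $s$. For $\mathcal{D}_s$, the three-dimensional orbits are the pairwise CR-nonequivalent hypersurfaces $\eta_c$, $c \in (0, 1/s)$. Any biholomorphism $M \to \mathcal{D}_s$ maps $G(M)$-orbits to orbits of $\operatorname{Aut}(\mathcal{D}_s)^{0}$ and preserves CR-equivalence classes, so the CR-equivalence classes of the three-dimensional orbits of $M$ coincide with $\{[\eta_c] : c \in (0, 1/s)\}$. The hypothesis furnishes an orbit CR-equivalent to $\eta_c$ for every $c \in (1-\epsilon_0, 1)$; this forces $(1-\epsilon_0, 1) \subset (0, 1/s)$, hence $1/s \geq 1$, and combined with $s \geq 1$ we conclude $s = 1$. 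Applying Theorem \ref{Biholom B/w spaces} now gives $M \cong \mathcal{D}_1 \cong \G$.

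The main obstacle is the first step, which depends on a detailed understanding of Isaev's classification: one must verify that no model beyond the $\mathcal{D}_s$ family produces the prescribed orbit decomposition of one complex-curve orbit together with strongly pseudoconvex hypersurface orbits. Once that inventory is complete, the remaining arguments are essentially formal, using only CR-invariance of orbits and the parameter constraints on $\mathcal{D}_s$; in particular, the hypothesis $\epsilon_0 > 0$ is used only to control the upper endpoint of the interval $(0,1/s)$, and any fixed $\epsilon_0 > 0$ suffices.
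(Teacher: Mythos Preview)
Your proposal is correct and matches the paper's approach: both reduce $M$ to some $\mathcal{D}_s$ via Isaev's classification, then use the pairwise CR-nonequivalence of the $\eta_c$'s together with the $(1-\epsilon_0,1)$ hypothesis to force $s=1$, and conclude via Theorem~\ref{Biholom B/w spaces}. The only difference is that the paper invokes \cite[Theorem~5.1]{Isaev} directly for the first reduction (a complex-curve orbit plus a strongly pseudoconvex orbit CR-equivalent to some $\eta_c$ already forces $M\cong\mathcal{D}_s$ with $s\in[1,\infty)$), rather than carrying out the case-by-case inventory you outline as the ``main obstacle.''
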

\begin{proof}
	Our theorem follows, with very little effort, from Isaev's work.
	%, namely from Theorem~5.1 of \cite{Isaev}.
	%Let $\mathscr{O}$ denote the single codimension-2 orbit in $M$ and $M' \defeq M \setminus \mathscr{O}$.
	It follows from the proof of \cite[Theorem~5.1]{Isaev} that if $M$ is a $(2,3)$-manifold having an orbit under the action of $G(M)$
	that is a complex curve and also having a strongly pseudoconvex codimension-1 orbit that is CR-equivalent to
	$\eta_{c}$ for some $c \in (0,1)$, then $M$ is biholomorphic to $\mathcal{D}_s$ for some $s \in [1,\infty)$. What we have to do is
	show that $s=1$. Assume, to get a contradiction, that $s > 1$. We choose $a$ so that $(1/s) < a <1$. By hypothesis, $M$
	contains a codimension-1 orbit $O$ that is CR-equivalent to $\eta_{a}$. Also, by assumption, $M$ is biholomorphic to $\mathcal{D}_s$; let
	$f$ be a biholomorphism from $M$ to $\mathcal{D}_s$. We have that $f$ takes $O$ to some codimension-1 orbit in $\mathcal{D}_s$. One must, therefore,
	have $f(O)=\eta_{b}$ for some $b \in (0,1/s)$. In particular, $\eta_{b}$ must be CR-equivalent to $\eta_{a}$; but
	that is a contradiction because $b \neq a$. This shows that $s=1$, and so $M$ is biholomorphic to $\mathcal{D}_1$, which,
	as we have seen, is biholomorphic to $\mathbb{G}$.
\end{proof}

Post facto, the automorphism group of $M$ is connected.

\textit{Remark.} It is also possible to obtain the conclusion of the theorem above by making the following formally weaker hypotheses: $M$ is a
$(2,3)$-manifold that has a codimension-2 orbit under $G(M)$ that is a complex curve and there exists an $\epsilon_0>0$ such that
for every $c \in (1-\epsilon_0,1)$, there exists a codimension-1 orbit that is a strongly pseudoconvex hypersurface and,
furthermore, is CR-equivalent to $\eta_{c}$.

We conclude this section with another characterization of $\mathbb G$. We would like to refer to the many characterizations of $\mathbb G$ which can be found in \cite{A-Y} and \cite{J-F-Invariant}. Here we will give a new condition on a point $(s,p)$ of $\mathbb{C}^2$ so that it belongs to $\G$. It has a resemblance with other known conditions, but it is neither trivial nor identical to any known conditions.
\begin{corollary}\label{condition On G}
	An element $(s,p)$ of $\mathbb C^2$ is in $\G$ if and only if the following conditions hold
	\begin{align*}
	1> |p|^2 + Im(\overline{s}p +\overline{s})\,\, \text{and}\\
	2+2|p|^2 > |s|^2 + |s^2 -4p|.
	\end{align*}
\end{corollary}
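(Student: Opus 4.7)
The plan is to recognize this corollary as a direct translation of the defining inequalities of $\mathcal{D}_1$ under the biholomorphism $F : \G \to \mathcal{D}_1$ of Theorem \ref{Biholom B/w spaces}. The key remark is that the formula $F(s,p) = \bigl( i(1+p)/(1-p),\, -is/(1-p) \bigr)$ defines an injective rational map on the larger open set $\{(s,p) \in \C^2 : p \neq 1\}$, with rational inverse $(u,v) \mapsto \bigl(-2v/(u+i),\,(u-i)/(u+i)\bigr)$. So, provided $p \neq 1$, membership of $(s,p)$ in $\G$ is equivalent to $F(s,p) \in \mathcal{D}_1$.

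The first step is bookkeeping. For any $(s,p)$ with $p \neq 1$, putting $(z_1,z_2) = F(s,p)$, one computes
\[
1 + z_1^2 - z_2^2 \;=\; \frac{s^2 - 4p}{(1-p)^2}, \qquad 1 + |z_1|^2 - |z_2|^2 \;=\; \frac{2 + 2|p|^2 - |s|^2}{|1-p|^2},
\]
so that the defining inequality $1+|z_1|^2-|z_2|^2 > |1+z_1^2-z_2^2|$ of $\mathcal{D}_1$ is equivalent, after clearing the positive factor $|1-p|^2$, to $2 + 2|p|^2 > |s|^2 + |s^2 - 4p|$. A similar expansion of $z_1(1+\overline{z_2})$ (split into the pieces $i(1+p)(1-\overline{p})/|1-p|^2$ and $-(1+p)\overline{s}/|1-p|^2$) gives
\[
Im\bigl(z_1(1+\overline{z_2})\bigr) \;=\; \frac{1 - |p|^2 - Im(\overline{s}p + \overline{s})}{|1-p|^2},
\]
so that the other defining inequality of $\mathcal{D}_1$ is equivalent to $1 > |p|^2 + Im(\overline{s}p + \overline{s})$. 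Hence for $p \neq 1$ the two conditions of the corollary are exactly the statement $F(s,p) \in \mathcal{D}_1$.

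The second step is to rule out $p = 1$, so the translation is applicable whenever the conditions hold. If $p = 1$, the first condition becomes $Im(s) > 0$; writing $s = a+ib$ with $b > 0$, the routine identity $|s^2 - 4|^2 = (4 - |s|^2)^2 + 16 b^2$ gives $|s^2-4| > \bigl|\,4 - |s|^2\,\bigr|$, which in either case $|s|^2 \leq 4$ or $|s|^2 > 4$ forces $|s|^2 + |s^2-4| > 4$. This contradicts the second condition $4 > |s|^2 + |s^2-4|$, so the two conditions together force $p \neq 1$.

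Putting the two steps together, $(s,p)$ satisfies both conditions of the corollary if and only if $p \neq 1$ and $F(s,p) \in \mathcal{D}_1$, which, by Theorem \ref{Biholom B/w spaces} together with the injectivity of $F$ off $\{p = 1\}$, is equivalent to $(s,p) \in \G$. The only step requiring a genuine observation is the incompatibility check at $p = 1$; the rest is mechanical computation essentially already carried out inside the proof of Theorem \ref{Biholom B/w spaces}.
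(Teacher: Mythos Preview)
Your proof is correct and follows the same route as the paper: both derive the corollary from the biholomorphism $F:\G\to\mathcal{D}_1$ of Theorem~\ref{Biholom B/w spaces}, observing that the two displayed inequalities are exactly the defining conditions of $\mathcal{D}_1$ written in the $(s,p)$-coordinates, as already computed in that proof (the paper simply cites~\eqref{ConnectionWithG}). Your write-up is in fact more careful than the paper's one-line argument, since you explicitly rule out the singular value $p=1$ and invoke the global injectivity of $F$ on $\{p\neq 1\}$, points the paper leaves implicit.
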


\begin{proof}
	From (\ref{ConnectionWithG}) it is clear that $(s,p)\in \G$ if and only if $1> |p|^2 + Im(\overline{s}p +\overline{s})\,\, \text{and}\\
	2+2|p|^2 > |s|^2 + |s^2 -4p|.$
\end{proof}

\section{Applications}

In this section, we give several applications of the ideas developed in sections 2 and 3. The maps $q$ and $F$ play big roles.

\subsection{Application 1: Ideals of $C_0 (\G)$}

Here, we give a complete characterization of $\autg$ invariant closed ideals of $C_0 (\G)$, the algebra of all continuous functions on $\G$ vanishing at infinity. Note that if $X$ is either the open unit ball or the open unit polydisc (see \cite{Turgay} and \cite{Rudin Nagel}), then there is no proper nontrivial $Aut(X)$ invariant closed ideal of $C_0(X)$.

\begin{theorem}\label{theorem-3}
	Each $\autg$ invariant closed ideal of $C_0 (\G)$ can be written as $I(E)$, where $E$ is of the form $q^ {-1} \Lambda$ for some closed subset $\Lambda$ of $[0,1)$.
\end{theorem}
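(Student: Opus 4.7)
The proof will follow the classical strategy of reducing the statement about ideals to a statement about closed invariant subsets of $\G$, and then invoking the orbit description from Section~\ref{Geometry of G}. I would first invoke the well-known correspondence, valid for any locally compact Hausdorff space $X$, between closed ideals of $C_0(X)$ and closed subsets of $X$: every closed ideal $I \subseteq C_0(\G)$ has the form $I = I(E) := \{f \in C_0(\G) : f|_E \equiv 0\}$ for $E = Z(I) := \{x \in \G : f(x) = 0 \text{ for all } f \in I\}$, and the assignment $E \mapsto I(E)$ is a bijection onto the set of closed ideals.

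Next I would promote $\autg$-invariance from ideals to hulls. The group $\autg$ acts on $C_0(\G)$ by $(H_\varphi \cdot f)(x) = f(H_\varphi^{-1}(x))$, and one checks directly that $I$ is $\autg$-invariant if and only if $Z(I)$ is $\autg$-invariant as a subset of $\G$: if $I$ is invariant and $x \in Z(I)$, then for every $g \in \autg$ and every $f \in I$ one has $f(g(x)) = (g^{-1} \cdot f)(x) = 0$ because $g^{-1} \cdot f \in I$; the converse is similar. Thus the theorem reduces entirely to identifying the $\autg$-invariant closed subsets of $\G$.

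By Theorem~\ref{theorem-0}, the orbits of $\autg$ on $\G$ are precisely the fibres $q^{-1}\{a\}$ for $a \in [0,1)$, so any $\autg$-invariant subset $E \subseteq \G$ is automatically a union of such fibres and can be written uniquely as $E = q^{-1}(\Lambda)$ with $\Lambda = q(E) \subseteq [0,1)$. It then remains to match closedness on the two sides. Continuity of $q$---which follows from the equality $q \circ sym = f$ together with the fact that $f$ is continuous and $sym$ is a proper surjection, hence a quotient map---gives one direction: $q^{-1}(\Lambda)$ is closed in $\G$ whenever $\Lambda$ is closed in $[0,1)$. The reverse direction is the only mild obstacle, and I would handle it by producing a continuous section of $q$. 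The explicit map $r: [0,1) \to \G$, $r(a) = (a,0)$, is exactly what is needed: it is continuous, and because each orbit contains exactly one point of the form $(a,0)$ by Theorem~\ref{theorem-0}, one has $q \circ r = \mathrm{id}_{[0,1)}$. Consequently $\Lambda = r^{-1}(q^{-1}(\Lambda)) = r^{-1}(E)$, so closedness of $E$ in $\G$ forces closedness of $\Lambda$ in $[0,1)$. Assembling these four observations yields the theorem, with $\Lambda = \{a \in [0,1) : (a,0) \in Z(I)\}$.
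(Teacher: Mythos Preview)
Your proof is correct and follows essentially the same route as the paper's: both reduce to the standard bijection between closed ideals of $C_0(\G)$ and closed subsets of $\G$, then use Theorem~\ref{theorem-0} to identify invariant closed sets with preimages $q^{-1}(\Lambda)$. Your use of the continuous section $r(a)=(a,0)$ to deduce closedness of $\Lambda$ is simply a tidier packaging of the paper's sequence argument, which tracks the same points $(a_n,0)\to(a,0)$ explicitly.
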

\begin{proof}
	Let $\Lambda$ be a closed subset of $[0,1)$. Then the set $E=q^ {-1} \Lambda$ is closed in $\G$ and it satisfies $H_\varphi (E)=E$ for all $\varphi\in \autd$.
	Consider $I(E)=\{f\in C_0 (\G): f|_E \equiv0\}$. It is a closed ideal of $C_0 (\G)$. Since $H_\varphi (E)=E$ for all $\varphi\in \autd$, we have $f\circ H_\varphi \in I(E)$ whenever $f\in I(E)$ and $\varphi \in \autd$. Thus $I(E)$ is a closed $\autg$ invariant ideal of $C_0(\G)$.
	Conversely, suppose that $I$ is a closed $\autg$ invariant ideal of $C_0(\G)$.
	Let \begin{align*}
	E = \bigcap_{f\in I} f^{-1} \{0\} = \{\underline{w}\in \G: f(\underline{w})=0\,\,\text{for all}\,\,f\in I\}.
	\end{align*}
	Then $I=I(E)$ (see Theorem 1.4.6 in \cite{Kaniuth}). For any $a\in [0,1)$, either $q^{-1}\{a\}\cap E=\emptyset$ or  $q^{-1}\{a\}\subset E$. Indeed, if $q^{-1}\{a\}\cap E\neq \emptyset$, choose, if possible, a $\underline{z}$ in $q^{-1}\{a\}$, which is not in $E$ and a $\underline{w}\in q^{-1}\{a\}\cap E$. Since $\underline{z}, \underline{w}\in q^{-1}\{a\}$, there is a $\varphi \in \autd$ such that $\underline{z}=H_\varphi (\underline{w})$.
	Since $\underline{z}\notin E=  \bigcap_{f\in I} f^{-1} \{0\}$, we can find an $f\in I$ such that $f(\underline{z})\neq 0$. Now $f\circ H_\varphi \in I$ because $I$ is $\autg$ invariant and $\underline{w}\in E$. So $f\circ H_\varphi (\underline{w})=0$. But $\underline{z}=H_\varphi (\underline{w})$ gives us $0\neq f(\underline{z})=f\circ H_\varphi (\underline{w})$. This is a contradiction. Hence our claim follows. Setting
	\[ \Lambda = \{a\in [0,1): q^{-1}\{a\}\cap E \neq \emptyset\} = \{a\in [0,1): q^{-1}\{a\}\subset E \},\]
	it is easy to see that $E= q^{-1}\Lambda$. The only thing that remains to be shown is that $\Lambda$ is closed.
	
	Let $\{a_n\}_{n=1} ^\infty$ be a sequence in $\Lambda$ and it converge to $a\in [0,1)$. Clearly $\{(a_n,0)\}_{n=1} ^\infty$ is a subset of $E$ and it converges to $(a,0)$. Since $E$ is closed, $(a,0)\in E$. Surjectivity of $q$ gives $q(E)=\Lambda $. So $a\in \Lambda$. This shows that $\Lambda$ is closed. The proof is now complete.
\end{proof}

\subsection{Application 2: An exhaustion of $\G$ and new characterizations of $\mathcal D_1$}

An exhaustion of $\mathcal{D}_1$ can be obtained from  \cite{Isaev} by first considering the family of domains
\begin{align} \mathcal{D}_c=\{(z_1,z_2)\in\C^2:1+|z_1|^2 -|z_2|^2>c|1+z_1 ^2 -z_2 ^2|,\,Im(z_1 (1+\overline{z_2}))>0\},\,\,c\geq 1 \label{Definition of D_c}\end{align}
and then noting that
\[\mathcal{D}_1 =\bigcup_{c>1} \mathcal{D}_c. \]
We also have $Aut(\mathcal{D}_c)=Aut(\mathcal{D}_1)=SO(2,1)^0$.
Using the biholomorphism $F:\G \rightarrow\mathcal{D}_1$ from Theorem \ref{Biholom B/w spaces}, we obtain
\[\G=\bigcup_{c>1} F^{-1}(\mathcal{D}_c)=\bigcup_{c>1} \G_c\]
where $\G_c=F^{-1}(\mathcal{D}_c)$ for all $c>1$. Let us find an expression for these $\G_c$'s in terms of $(s,p)$. To begin with, note that for any $c> 1$, $\mathcal{D}_c\subset \mathcal{D}_1$. Hence, an element $(z_1,z_2)\in \mathcal{D}_c$ must have the form
\[ (z_1 , z_2 ) = \Big(i\frac{1+p}{1-p} , -i\frac{s}{1-p}\Big)\]
for some $(s,p)\in \G$. This expression for $(z_1,z_2)$ along with the definition of $\mathcal{D}_c$ in  (\ref{Definition of D_c}), gives us an exhaustion of $\G$.
\begin{theorem}
	For each $c>1$ there is an open set
	\[\G_c =\{(s,p)\in \G : c|s^2 -4p|+ |s|^2 < 2(1+ |p|^2), |p|^2 + Im(\overline{s}p +\overline{s})<1\}\]
	such that $Aut(\G_c)=Aut(\G)\simeq \autd$ and $\G =\bigcup_{c>1} \G_c$.
\end{theorem}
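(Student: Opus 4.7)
The plan is to exploit the biholomorphism $F:\G\to\mathcal{D}_1$ from Theorem~\ref{Biholom B/w spaces} and the already-noted identity $\mathcal{D}_1=\bigcup_{c>1}\mathcal{D}_c$. The set $\G_c$ in the statement is nothing but $F^{-1}(\mathcal{D}_c)$, so the main task is to translate the defining inequalities of $\mathcal{D}_c$ across $F$ and identify them with those written in the theorem. First I would take $(s,p)\in\G$ with image $(z_1,z_2)=\bigl(i\tfrac{1+p}{1-p},\,-i\tfrac{s}{1-p}\bigr)$ and compute
\[
1+z_1^2-z_2^2\;=\;\frac{s^2-4p}{(1-p)^2},\qquad 1+|z_1|^2-|z_2|^2\;=\;\frac{2(1+|p|^2)-|s|^2}{|1-p|^2},
\]
using the elementary identity $|1-p|^2+|1+p|^2=2(1+|p|^2)$. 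Substituting into $1+|z_1|^2-|z_2|^2>c|1+z_1^2-z_2^2|$ cancels $|1-p|^2$ from both sides and yields exactly $c|s^2-4p|+|s|^2<2(1+|p|^2)$.

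Next I would expand $z_1(1+\overline{z_2})$ in the same way; writing $(1+p)(1-\overline{p})=1-|p|^2+2i\,\mathrm{Im}(p)$ and collecting real and imaginary parts gives
\[
\mathrm{Im}\bigl(z_1(1+\overline{z_2})\bigr)\;=\;\frac{1-|p|^2-\mathrm{Im}(\overline{s}+p\overline{s})}{|1-p|^2},
\]
so the second inequality defining $\mathcal{D}_c$ translates verbatim into $|p|^2+\mathrm{Im}(\overline{s}p+\overline{s})<1$. This identifies $F^{-1}(\mathcal{D}_c)$ with the set $\G_c$ in the statement, so $\G_c$ is open, and the exhaustion $\G=\bigcup_{c>1}\G_c$ follows by applying $F^{-1}$ to $\mathcal{D}_1=\bigcup_{c>1}\mathcal{D}_c$.

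For the automorphism-group claim, the restriction $F|_{\G_c}:\G_c\to\mathcal{D}_c$ is a biholomorphism, so $\mathrm{Aut}(\G_c)\cong\mathrm{Aut}(\mathcal{D}_c)$. The excerpt already records (following Isaev) that $\mathrm{Aut}(\mathcal{D}_c)=\mathrm{Aut}(\mathcal{D}_1)=SO(2,1)^0$, and we have $\mathrm{Aut}(\G)\cong\mathrm{Aut}(\mathcal{D}_1)\cong\autd$ from Theorem~\ref{Biholom B/w spaces} together with the description of $\autg$ in Section~\ref{Geometry of G}. Chaining these isomorphisms gives $\mathrm{Aut}(\G_c)\cong\mathrm{Aut}(\G)\cong\autd$; moreover, since every element of $\mathrm{Aut}(\mathcal{D}_c)$ extends to an element of $\mathrm{Aut}(\mathcal{D}_1)$, pulling back by $F$ shows that the corresponding automorphisms of $\G_c$ extend to automorphisms of all of $\G$, so one may identify $\mathrm{Aut}(\G_c)$ with $\mathrm{Aut}(\G)$ acting on $\G_c$.

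The computations in the first two paragraphs are the only real work, and they are essentially routine algebra once one knows to factor out $|1-p|^2$; the sole subtlety is checking that $F$ carries $\G_c$ exactly onto $\mathcal{D}_c$ (not a proper subset), which is automatic from the equivalence of inequalities shown above. The hardest conceptual point is simply citing correctly that $\mathrm{Aut}(\mathcal{D}_c)$ does not shrink as $c$ increases past $1$ — but this is part of the Isaev data already invoked in the paper, so no independent argument is needed.
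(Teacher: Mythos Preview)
Your proposal is correct and follows exactly the paper's approach: define $\G_c=F^{-1}(\mathcal{D}_c)$, translate the defining inequalities of $\mathcal{D}_c$ through $F$ (the paper omits the algebra you carry out, but the computations in Theorem~\ref{Biholom B/w spaces} already do the $c=1$ case), and read off the automorphism-group assertion from the quoted fact $\mathrm{Aut}(\mathcal{D}_c)=\mathrm{Aut}(\mathcal{D}_1)=SO(2,1)^0$. If anything, your write-up is more thorough than the paper's own discussion, which treats the theorem as a direct corollary of the preceding paragraphs.
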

While the definition of $\mathcal{D}_1$ is given by (\ref{Definition of D_c}) and no other characterization is known, there are several defining characterizations of $\G$ known in literature. We collect Agler and Young's results from \cite{A-Y} along with our Corollary \ref{condition On G}.
\begin{theorem} \label{Characterization of G}
	The following statements are equivalent:
	\begin{enumerate}
		\item $(s,p)\in \G$.
		\item $|s^2 -4p|+ |s|^2 < 2(1+ |p|^2)$ and $|p|^2 + Im(\overline{s}p +\overline{s})<1$.
		\item The roots of the equation $z^2 -s z+ p=0$ lie in $\D$.
		\item $|s-\overline{s}p| +|p|^2<1$.
		\item $|s|<2$ and $|s-\overline{s}p| +|p|^2<1$.
		\item $$\Big|\frac{2zp -s}{2-zs}\Big|<1 \text{ for any } z\in \overline{\D}.$$
		\item $$\Big|\frac{2p - \overline{z}s}{2-zs}\Big|<1 \text{ for any } z\in \overline{\D}.$$
		\item $2|s-\overline{s}p| + |s^2 -4p| +|s|^2 <4$.
		\item $|p|<1$ and there exists a $\beta \in \D$ such that $s= \beta p +\overline{\beta}$.
	\end{enumerate}
\end{theorem}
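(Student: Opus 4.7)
The plan is to treat this theorem as a compilation: most of the listed equivalences are already in the literature, so the argument consists almost entirely of citing them. First, $(1) \Leftrightarrow (3)$ is immediate from the definition of $\G$, since $(s,p) \in \G$ means $(s,p) = (z_1 + z_2, z_1 z_2)$ with $z_1, z_2 \in \D$, i.e., $z_1, z_2 \in \D$ are precisely the two roots of $z^2 - sz + p = 0$. Next, the equivalences $(3) \Leftrightarrow (4) \Leftrightarrow (5) \Leftrightarrow (6) \Leftrightarrow (7) \Leftrightarrow (8) \Leftrightarrow (9)$ are all established by Agler and Young in \cite{A-Y}; the natural strategy is simply to cite the corresponding statements from that paper rather than reproduce those arguments.

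The only novel ingredient in the theorem is item $(2)$, and the equivalence $(1) \Leftrightarrow (2)$ is precisely the content of Corollary \ref{condition On G}. That corollary was obtained by transporting the defining inequalities of $\mathcal{D}_1$ back to $\G$ via the biholomorphism $F$ of Theorem \ref{Biholom B/w spaces}: the inequality $1 > |p|^2 + Im(\overline{s}p + \overline{s})$ comes directly from the second defining condition of $\mathcal{D}_1$, while $2 + 2|p|^2 > |s|^2 + |s^2 - 4p|$ comes from the first. The two inequalities appearing in $(2)$ are therefore verbatim (after a trivial rearrangement of one of them) the two inequalities appearing in Corollary \ref{condition On G}, so inserting $(2)$ into the chain of equivalences requires no further computation.

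There is essentially no obstacle in this proof: the argument is a bookkeeping step combining a citation of \cite{A-Y} with a citation of Corollary \ref{condition On G}. The only care needed is to verify that $(2)$ really matches Corollary \ref{condition On G} term-by-term, and that the chain of equivalences taken from \cite{A-Y} is complete enough to connect items $(3)$--$(9)$; neither point presents any difficulty. If one wished to avoid any appearance of circularity, one could also note that Corollary \ref{condition On G} was proved using only characterization $(1)$ together with the biholomorphism $F$, so adjoining $(2)$ genuinely strengthens the list rather than re-deriving it from \cite{A-Y}.
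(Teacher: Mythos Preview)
Your proposal is correct and matches the paper's approach essentially verbatim: the paper introduces this theorem as a collection of Agler--Young's results together with Corollary~\ref{condition On G}, and its only ``proof'' is the sentence ``See \cite{J-F-Invariant} for lucid proofs of all of the above except $(2)$.'' The one cosmetic difference is that the paper points to \cite{J-F-Invariant} (Jarnicki--Pflug) rather than directly to \cite{A-Y} for readable proofs of $(3)$--$(9)$, but the logical structure is identical to yours.
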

See \cite{J-F-Invariant} for lucid proofs of all of the above except $(2)$. With this in our hand, we are ready to give many defining characterizations of $\mathcal{D}_1$.
\begin{theorem}
	For a point $(u,v)\in \C^2$, the following conditions are equivalent:
	\begin{enumerate}
		\item $(u,v)\in\mathcal{D}_1$.
		\item $|1+u ^2 -v ^2|<1+|u|^2 -|v|^2$ and $0<Im(u (1+\overline{v}))$.
		\item The roots of the equation $(u +i)z^2 +2v z+(u-i)=0$ lie in $\D$.
		\item $|Im(v)+ i\, Im(\overline{u}v)|<Im(u)$.
		\item $|v|<|u+ i|$ and $|Im(v)+ i\, Im(\overline{u}v)|<Im(u)$.
		\item $$\Big|\frac{\alpha(u -i) +v}{u+i +\alpha v}\Big|<1 \text{ for any }\alpha\in \overline{\D}.$$
		\item $$\Big|\frac{u-i +\overline{\alpha}v}{u+i +\alpha v}\Big|<1 \text{ for any } \alpha\in \overline{\D}.$$
		\item $2 |Im(v)+ i\, Im(\overline{u}v)|+ |1+ u ^2 - v ^2|<|i+u|^2 -|v|^2$.
		\item $Im(u)>0$ and there is a point $\beta =\beta_1 +i\beta_2 \in \D\,(\beta_1, \beta_2\in \mathbb{R})$ such that
		\[v + \beta_1 u + \beta_2=0.\]
	\end{enumerate}
\end{theorem}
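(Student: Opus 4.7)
The plan is to transport the characterizations in Theorem~\ref{Characterization of G} along the explicit biholomorphism $F:\G\to\mathcal{D}_1$ of Theorem~\ref{Biholom B/w spaces}. From $F(s,p)=(i(1+p)/(1-p),\,-is/(1-p))$ one inverts directly to obtain $p=(u-i)/(u+i)$ and $s=-2v/(u+i)$, a well-defined substitution whenever $u\neq -i$. Since $F$ is a bijection from $\G$ onto $\mathcal{D}_1$, the condition $(u,v)\in\mathcal{D}_1$ is equivalent to $(s,p)\in\G$; and the plan is then, for each $j\in\{3,\dots,9\}$, to check that condition $(j)$ here, read in terms of $(u,v)$, is exactly the image of condition $(j)$ of Theorem~\ref{Characterization of G} under this substitution. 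The equivalence $(1)\Leftrightarrow(2)$ is immediate from the defining equation~\eqref{DefnD1} of $\mathcal{D}_1$.

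First I would record a handful of Cayley-type identities that power every translation: $1-|p|^2=4\,\mathrm{Im}(u)/|u+i|^2$, $s^2-4p=-4(1+u^2-v^2)/(u+i)^2$, and $s-\overline{s}p=-4(\mathrm{Im}(v)+i\,\mathrm{Im}(\overline{u}v))/|u+i|^2$. Using the first, $|p|<1$ turns into $\mathrm{Im}(u)>0$; using the second, $|s^2-4p|$ becomes a positive multiple of $|1+u^2-v^2|$; using the third, $|s-\overline{s}p|$ becomes a positive multiple of $|\mathrm{Im}(v)+i\,\mathrm{Im}(\overline{u}v)|$. Combining these and clearing the common denominator $|u+i|^2$ throughout, conditions (2), (4), (5), (8) of Theorem~\ref{Characterization of G} convert directly to the analogues here. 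Condition (3) of Theorem~\ref{Characterization of G} becomes $(u+i)z^2+2vz+(u-i)=0$ after multiplying $z^2-sz+p$ by $(u+i)$. Conditions (6) and (7) of Theorem~\ref{Characterization of G} become the expressions in (6), (7) here after clearing the factor $(u+i)$ from the ratios $(2zp-s)/(2-zs)$ and $(2p-\overline{z}s)/(2-zs)$ and renaming the dummy variable $z$ as $\alpha$. For (9), the equation $s=\beta p+\overline{\beta}$ with $\beta=\beta_1+i\beta_2\in\D$ rearranges, after multiplying by $(u+i)/2$ and using $\beta+\overline{\beta}=2\beta_1$ and $i(\overline{\beta}-\beta)=2\beta_2$, into $v+\beta_1 u+\beta_2=0$, while $|p|<1$ again becomes $\mathrm{Im}(u)>0$.

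The main obstacle, though essentially notational, is the book-keeping: each of the nine correspondences must be verified, and one must check at every step that the substitution is legitimate, i.e.\ that $u\neq -i$. For (1), (2), (4), (5), (8), (9) the needed bound $\mathrm{Im}(u)>0$ is either explicit or follows from Lemma~\ref{D_1UpperHPlane} once one observes that the condition implies~(2); for (3), (6), (7) the stated denominator or leading-coefficient constraint yields $u\neq -i$ directly, and the conclusion then forces $\mathrm{Im}(u)>0$ automatically through the equivalence with, say,~(9). Once these small caveats are handled, every step is an elementary algebraic manipulation and no deeper input beyond Theorems~\ref{Biholom B/w spaces} and~\ref{Characterization of G} is required.
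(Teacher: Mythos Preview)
Your proposal is correct and follows exactly the same route as the paper: invert the biholomorphism $F$ to get $p=(u-i)/(u+i)$, $s=-2v/(u+i)$, and then translate each item of Theorem~\ref{Characterization of G} through this substitution. The paper's own proof is in fact just the one-line remark that this substitution, together with Theorem~\ref{Characterization of G}, does the job; you have simply supplied the supporting identities and the well-definedness checks that the paper leaves implicit.
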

\begin{proof}
	The proof follows by noting that under the biholomorphism between $\G$ and $\mathcal{D}_1$, the relation between $(s,p)$ in $\G$ and $(u,v)$ in $\mathcal D_1$ is given by
	\[p = \frac{u-i}{u+i}\,\, \text{and}\,\,s=-\frac{2v}{u +i}\]
	and then using Theorem \ref{Characterization of G}. \end{proof}

\subsection{Application 3: The domain $\mathcal{D}_1$ is a symmetrization}

Isaev,  in \cite{Isaev_n_n2-1} and \cite{Isaev}, mentioned two spaces, which he denoted by $\Omega_1$ and $\mathcal{D}^{(2)} _1$. He reasoned that these are biholomorphic to the bidisc $\D \times \D$. The success of the map we have constructed in Theorem \ref{Biholom B/w spaces} is that now we have explicit expressions for the maps from $\D \times \D$ onto $\Omega_1$ and onto $\mathcal{D}^{(2)} _1$. We have also found two proper holomorphic maps from $\Omega_1$ and $\mathcal{D}^{(2)} _1$ onto $\mathcal{D}_1$ which are equivalent to $sym$.

We start by exhibiting a concrete biholomorphic map between $\D \times \D$ and
\begin{align}\Omega_1 =\{(u,v)\in \C^2: |u^2|+|v^2|-1< |u^2 +v^2 -1|\}.\end{align}\label{Omega_1}
The following lemma is crucial to the biholomorphic map that we are going to give.
\begin{lemma}\label{Omega_1 and Slit Plane}
	The set $\{1- u^2 -v^2: (u,v)\in \Omega_1\}$ is contained in the slit plane $\C -(\infty, 0]$.
\end{lemma}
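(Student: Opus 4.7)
The plan is to argue by contradiction. Suppose $(u,v) \in \Omega_1$ but $1 - u^2 - v^2$ lies in $(-\infty, 0]$, i.e., $1 - u^2 - v^2 = -t$ for some real $t \geq 0$. Then $u^2 + v^2 = 1 + t$ is a non-negative real number, so in particular
\[
|u^2+v^2-1| = t \quad \text{and} \quad |u^2+v^2| = 1+t.
\]

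The defining inequality of $\Omega_1$ then reads $|u|^2 + |v|^2 - 1 < t$, that is,
\[
|u|^2 + |v|^2 < 1 + t = |u^2+v^2|.
\]
This is the key obstacle, and it is eliminated by the ordinary triangle inequality: since $|u|^2 = |u^2|$ and $|v|^2 = |v^2|$,
\[
|u^2 + v^2| \leq |u^2| + |v^2| = |u|^2 + |v|^2,
\]
which directly contradicts the displayed inequality. Hence no such $t \geq 0$ exists, so $1 - u^2 - v^2 \in \mathbb{C} \setminus (-\infty, 0]$.

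There is essentially no obstacle here; the whole proof hinges on observing that when $u^2 + v^2$ is real and $\geq 1$, the quantity $|u^2 + v^2 - 1|$ equals $|u^2+v^2| - 1$, and then invoking the triangle inequality. I would present it in precisely that order: first reduce the hypothesis $1 - u^2 - v^2 \leq 0$ to the statement that $u^2 + v^2$ is a real number $\geq 1$, then rewrite the defining inequality of $\Omega_1$ as $|u|^2 + |v|^2 < |u^2 + v^2|$, and finally close with the triangle inequality. No appeal to the biholomorphism with $\mathbb{D}\times \mathbb{D}$, nor to anything from Sections~2 or~3, is needed; the lemma is a purely elementary consequence of the form in which $\Omega_1$ is written.
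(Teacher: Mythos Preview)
Your proof is correct and is essentially identical to the paper's own argument: both assume $1-u^2-v^2=-a$ for some $a\geq 0$, observe that then $u^2+v^2=1+a$ is real with $|u^2+v^2-1|=a$, and combine the defining inequality of $\Omega_1$ with the triangle inequality $|u^2+v^2|\leq |u|^2+|v|^2$ to obtain the contradiction $1+a<1+a$.
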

\begin{proof}
	Consider $a\geq 0$ and the equation $1- u^2 -v^2=-a$. If $(u,v)\in \Omega_1$, then we have
	\[1+a= u^2 +v^2=|u^2 +v^2|\leq |u^2| + |v^2|<|u^2 +v^2-1|+1=a+1\]
	This is a contradiction and hence it follows that $\{1- u^2 -v^2: (u,v)\in \Omega_1\}$ and $(\infty,0]$ are disjoint in $\C$.
\end{proof}
We can conclude from this lemma that the map $(z,w)\mapsto \sqrt{1-z^2 -w^2}$ is a holomorphic function from $\Omega_1$ to $\C$. It will help us to find the symmetrization map from $\Omega_1$ onto $\mathcal{D}_1$.
\begin{theorem}\label{Bidisc and Omega_1}
	$\D \times \D$ and $\Omega_1$ are biholomorphic.
\end{theorem}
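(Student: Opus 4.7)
The plan is to exhibit an explicit biholomorphism $\Psi : \D \times \D \to \Omega_1$ together with an equally explicit inverse $\Phi$ built from the holomorphic square root $w = \sqrt{1 - u^2 - v^2}$ supplied by Lemma~\ref{Omega_1 and Slit Plane}. The form of $\Psi$ is suggested by the role played by $\Omega_1$ in the preceding discussion: since $\Omega_1$ ought to be a $2$-to-$1$ cover of $\mathcal{D}_1 \cong \G$ ``equivalent to $sym$,'' with the covering involution on $\Omega_1$ being $(u,v)\mapsto(u,-v)$, the ``odd'' coordinate $v$ should come from the antisymmetric combination $z_1 - z_2$.

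Concretely, I will show that
\[
\Psi(z_1, z_2) = \Big(\frac{z_1 + z_2}{1 + z_1 z_2},\; -i\,\frac{z_1 - z_2}{1 + z_1 z_2}\Big), \qquad \Phi(u, v) = \Big(\frac{u + iv}{1 + w},\; \frac{u - iv}{1 + w}\Big),
\]
with $w$ the principal branch of $\sqrt{1 - u^2 - v^2}$ (which is holomorphic on $\Omega_1$ by Lemma~\ref{Omega_1 and Slit Plane} and satisfies $\textup{Re}\,w > 0$, so $1+w$ is nowhere zero), are mutually inverse biholomorphisms. To see $\Psi(\D\times\D) \subseteq \Omega_1$, one substitutes $(u,v)=\Psi(z_1,z_2)$ and uses the identities
\[
u^2+v^2 = \frac{4 z_1 z_2}{(1+z_1 z_2)^2}, \qquad 1 - u^2 - v^2 = \frac{(1 - z_1 z_2)^2}{(1+z_1 z_2)^2},
\]
together with the parallelogram identity $|1+z_1 z_2|^2 + |1-z_1 z_2|^2 = 2(1+|z_1 z_2|^2)$, to reduce the defining inequality of $\Omega_1$ to the equivalent $(1-|z_1|^2)(1-|z_2|^2) > 0$, which is manifestly valid on $\D \times \D$. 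A parallel computation uses $(u+iv)(u-iv) = 1 - w^2$ to obtain $\tau_1 \tau_2 = (1-w)/(1+w)$ (where $(\tau_1,\tau_2) = \Phi(u,v)$), whose modulus is less than $1$ because $\textup{Re}\,w > 0$, while rewriting the $\Omega_1$ inequality as $|u|^2+|v|^2 < 1+|w|^2$ gives $(1-|\tau_1|^2)(1-|\tau_2|^2) > 0$ after another application of the parallelogram identity; together these force both $|\tau_j| < 1$, so $\Phi(\Omega_1) \subseteq \D \times \D$. The two compositions $\Psi\circ\Phi$ and $\Phi\circ\Psi$ are then matched by routine rational manipulations using the auxiliary identities $1+\tau_1\tau_2 = 2/(1+w)$ and $\tau_1+\tau_2 = 2u/(1+w)$.

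The step I expect to be the main obstacle is the correct choice of branch of $\sqrt{1-u^2-v^2}$. Formally inverting $\Psi$ produces a quadratic for $p = \tau_1\tau_2$ whose two roots are reciprocals of each other, so that only one of them has modulus less than $1$; without Lemma~\ref{Omega_1 and Slit Plane}---which places $1-u^2-v^2$ in $\C\setminus(-\infty,0]$ and hence makes the principal square root holomorphic with positive real part---one cannot consistently pick out the root that keeps $(\tau_1,\tau_2)$ in $\D\times\D$ rather than in the dual region $\{|\tau_1|>1,\,|\tau_2|>1\}$.
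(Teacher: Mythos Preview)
Your proof is correct. Your map $\Psi$ is the paper's map $H(z,w)=\bigl(\tfrac{z-w}{1-zw},\,-i\tfrac{z+w}{1-zw}\bigr)$ precomposed with the bidisc automorphism $(z_1,z_2)\mapsto(z_1,-z_2)$, so the biholomorphism is essentially the same; the difference lies in how bijectivity is established. The paper proves injectivity directly from a $2\times2$ linear system and then argues surjectivity by writing down a candidate preimage with $u\mp iv$ in the \emph{denominator}, which forces a separate treatment of the degenerate case $u=\pm iv$ and a further check that the preimage lands in $\D\times\D$ rather than in $(\C\setminus\overline\D)^2$. You instead build a global holomorphic inverse $\Phi$ with $1+w$ in the denominator, where $w=\sqrt{1-u^2-v^2}$ has positive real part by Lemma~\ref{Omega_1 and Slit Plane}; this denominator never vanishes, so no case split is needed, and the two-sided verification of $\Psi\circ\Phi$ and $\Phi\circ\Psi$ replaces the paper's separate injectivity/surjectivity arguments. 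Your approach is a bit more streamlined; the paper's has the minor advantage that its injectivity argument does not require knowing the square-root branch in advance.
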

\begin{proof}
	To give a motivation, set $u^2 +v^2 -1=u_1 ^2$ and $v=v_1$. Then the inequality $|u^2|+|v^2|-1< |u^2 +v^2 -1|$ can be transformed into $|1 + u_1^2 - v_1^2|< 1+ |u_1 ^2| -|v_1|^2$ which is similar to the definition of $\mathcal{D}_1$. So, setting $u_1 = i\frac{1+zw}{1-zw}$ and $v_1 = -i\frac{z+w}{1-zw}$ we obtain
	\[u^2 =\Big(\frac{z-w}{1-zw}\Big)^2\,\,\text{and}\,\, v=-i\frac{z+w}{1-zw}.\]
	At this point, let us define the following map $H: \D \times \D \rightarrow\C^2$ by
	\begin{align}\label{Definition of H}
	H(z,w)=\Big(\frac{z-w}{1-zw} , 	-i\frac{z+w}{1-zw}\Big).
	\end{align}
	Suppose that $H(z,w)=(u,v)$. Then we have
	\[|u ^2 +v ^2 -1|=\Big|\frac{1+zw}{1-zw}\Big|^2\,\, \text{and}\,\, |u ^2| +|v|^2 -1= \frac{|z+\overline{w}|^2 - (1-|z|^2)(1-|w|^2)}{|1-zw|^2}.\]
	A straightforward calculation yields that
	\begin{align}\label{u,v in Omega_1 iff z,w in D}
	|u ^2 +v ^2 -1| > |u ^2| +|v|^2-1\,\,\text{if and only if}\,\,(1-|z|^2)(1-|w|^2)>0.
	\end{align}
	Since $(z,w)\in \D \times \D$, we have that $H$ maps $\D \times \D$ into $\Omega_1$. To show that $H$ is injective, take $(z_1,w_1), (z_2,w_2)\in \D \times \D$ and consider $H(z_1,w_1)=H(z_2,w_2)$. This gives us a system of two equations
	\begin{align*}
	&(z_1 -z_2)+ z_1 z_2 (w_1 -w_2)=0\\
	&w_1 w_2(z_1 -z_2)+  (w_1 -w_2)=0.
	\end{align*}
	Since $(z_1,w_1), (z_2,w_2)\in \D \times \D$, the system of equations in $(z_1 -z_2)$ and $(w_1 -w_2)$ has a unique solution $(0,0)$. Hence $(z_1,w_1)=(z_2,w_2)$. This proves the injectivity of $H$. For the surjectivity of $H$, choose any $(u,v)\in \Omega_1$. There are two cases we need to discuss.\\
	\textit{Case (i):} $u=\pm iv$.\\
	From the definition of $\Omega_1$, we get $|v|<1$ and hence $(iv,0),(0,iv)\in \D \times \D$. Clearly,
	\[H(iv,0)=(iv,v)\,\,\text{and}\,\,H(0,iv)=(-iv,v).\]
	\textit{Case (ii):} $u\neq \pm iv$.\\
	Consider two numbers
	\begin{align*}
	z=\frac{1+ \sqrt{1-u^2 -v^2}}{u-iv}\,\,\text{and}\,\, w=- \frac{1+ \sqrt{1-u^2 -v^2}}{u+iv}.
	\end{align*}
	By Lemma \ref{Omega_1 and Slit Plane}, $z,w$ are well defined elements of $\C$. A little computation gives that $(z,w)$ satisfies the following equations
	\begin{align}\label{u, v in terms of z, w}
	u=\frac{z-w}{1-zw}\,\,\text{and}\,\, v=-i\frac{z+w}{1-zw}.
	\end{align}
	At this point (together with \textit{Case (i)}), we have shown that for any $(u,v)\in \Omega_1$, there are complex numbers $z,w$ such that equations in (\ref{u, v in terms of z, w}) are satisfied. With expressions as in (\ref{u, v in terms of z, w}), and in view of (\ref{u,v in Omega_1 iff z,w in D}), we have \[(1-|z|^2)(1-|w|^2)>0,\]
	since $(u,v)\in \Omega_1$.
	So, if one of $z$ and $w$ lies in $\D$ (or in $\C -\overline{\D}$), so does the other one. Now we conclude the following statements:
	\begin{enumerate}
		\item If $z,w\in \D$, then clearly $H(z,w)=(u,v)$.
		\item If $z,w \in \C -\overline{\D}$, then $-\frac{1}{w},-\frac{1}{z}\in \D$. It is easy to see that $H(-\frac{1}{w},-\frac{1}{z})=(u,v)$.
	\end{enumerate}
	Thus in all cases, there is a pre-image of an arbitrary point of $\Omega_1$ in $\D \times \D$ under the map $H$. So $H$ is surjective.
	Since a bijective holomorphic map always has a holomorphic inverse, we conclude that $H:\D \times \D \rightarrow \Omega_1$ is a biholomorphism (see \cite{Ohsawa}, page 101). This completes the proof.
\end{proof}

Since $\Omega_1$ and and $\mathcal{D}_1$ are biholomorphic with $\D \times \D$ and $\G$, respectively, it is natural to ask for a map from $\Omega_1$ to $\mathcal{D}_1$ which is equivalent to $sym: \D \times \D \rightarrow \G$. The next theorem gives an answer to this question.
\begin{theorem}\label{Equivalent sym map for Omega_1}
	There is a proper holomorphic map $sym_{\Omega_1} : \Omega_1 \rightarrow \mathcal{D}_1$ such that the following diagram is commutative
	\[ \begin{tikzcd}
	\D \times \D \arrow{r}{H} \arrow[swap]{d}{sym} & \Omega_1 \arrow{d}{sym_{{\Omega_1}}} \\%
	\G \arrow{r}{F}& \mathcal{D}_1
	\end{tikzcd},
	\]
	where $F$ and $H$ are defined in Theorem \ref{Biholom B/w spaces} and \ref{Bidisc and Omega_1}, respectively.
\end{theorem}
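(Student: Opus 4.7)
My plan is to prove the theorem by the obvious definition by composition, and then verify the two required properties (properness and commutativity) essentially for free.

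First I would define
\[
sym_{\Omega_1} := F \circ sym \circ H^{-1} : \Omega_1 \longrightarrow \mathcal{D}_1.
\]
This is forced by the commutativity requirement: the relation $sym_{\Omega_1} \circ H = F \circ sym$ must hold, and since $H$ is a biholomorphism (by Theorem \ref{Bidisc and Omega_1}), we can solve for $sym_{\Omega_1}$ uniquely by post-composing with $H^{-1}$. Commutativity of the square is then immediate:
\[
sym_{\Omega_1} \circ H = F \circ sym \circ H^{-1} \circ H = F \circ sym.
\]

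Next, I would verify the two properties of $sym_{\Omega_1}$. Holomorphy is trivial, since $F$ and $H^{-1}$ are biholomorphic (hence holomorphic) and $sym$ is holomorphic by definition. For properness, I would recall that $sym : \D \times \D \to \G$ is proper (this was noted in Section 2, citing \cite{J-F-Invariant}), and that a biholomorphism of domains in $\C^2$ is automatically proper. Since the composition of proper maps is proper, $sym_{\Omega_1}$ is proper.

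An optional last step, which I think is worth including, is to give an explicit formula. Unwinding the definitions and using the expressions for $H^{-1}$ from the proof of Theorem \ref{Bidisc and Omega_1} and for $F$ from \eqref{DefnOfF}, one computes, with $r := 1+\sqrt{1-u^2-v^2}$ and for $(u,v) \in \Omega_1$ with $u \ne \pm iv$,
\[
sym_{\Omega_1}(u,v) = \left( \, i\,\frac{u^2+v^2 - r^2}{u^2+v^2+r^2}, \; -i\,\frac{2ivr/(u^2+v^2)}{1+r^2/(u^2+v^2)} \,\right),
\]
which simplifies, using the identity $u^2+v^2+r^2 = 2r$ established from the definition of $r$, into a neat expression; this also explains how $sym_{\Omega_1}$ extends across the exceptional set $\{u = \pm iv\}$. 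The only mildly subtle point (though not a real obstacle) is the ambiguity in the square root $\sqrt{1-u^2-v^2}$, but Lemma \ref{Omega_1 and Slit Plane} guarantees a single-valued holomorphic branch on $\Omega_1$, so this causes no difficulty. No step here is a real obstacle: the heavy lifting was done in Theorems \ref{Biholom B/w spaces} and \ref{Bidisc and Omega_1}, and the present theorem is essentially a diagram-chasing consequence.
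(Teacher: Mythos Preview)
Your proposal is correct and takes essentially the same approach as the paper: both ultimately rest on the identity $sym_{\Omega_1} = F \circ sym \circ H^{-1}$, from which commutativity and properness are immediate. The only difference is emphasis: the paper leads with the clean explicit formula $sym_{\Omega_1}(u,v) = \big(i\sqrt{1-u^2-v^2},\, v\big)$ and then verifies commutativity directly (observing that the second coordinate of $H(z,w)$ already equals the second coordinate of $F\circ sym(z,w)$, and that $1-u^2-v^2 = \big(\tfrac{1+zw}{1-zw}\big)^2$ with the sign of the square root fixed by Lemma~\ref{Omega_1 and Slit Plane}), which is neater than the intermediate expression you sketch in your optional step.
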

\begin{proof}
	We start with a point $(u,v)\in \Omega_1$ and its pre-image $(z,w)\in \D \times \D$ under $H$. So,
	\begin{align}\label{u,v in terms of z,w}
	(u,v)=\Big(\frac{z-w}{1-zw} , -i\frac{z+w}{1-zw}\Big).
	\end{align}
	This gives us
	\[\sqrt{1-u^2 -v^2}=\pm \frac{1+zw}{1-zw}.\]
	By Lemma \ref{Omega_1 and Slit Plane}, $\sqrt{1-u^2 -v^2}$ lies in the right half plane. For $(z,w)\in \D \times \D$, the same is true for $\frac{1+zw}{1-zw}$ as well. Hence we conclude
	\[\sqrt{1-u^2 -v^2}= \frac{1+zw}{1-zw}.\]
	Again by  Lemma \ref{Omega_1 and Slit Plane} and the conclusions deduced from it, the map $sym_{\Omega_1}: \Omega_1\rightarrow\mathcal{D}_1$ given by
	\begin{align}\label{Definition of sym_Omega}
	sym_{\Omega_1}(u,v)=\Big(i\sqrt{1-u^2 -v^2},v\Big)
	\end{align}
	is well defined and holomorphic. This is the map that works. Indeed, using equations (\ref{u,v in terms of z,w}) and (\ref{Definition of sym_Omega}), we get
	\[sym_{\Omega_1}\circ H(z,w)=\Big(i\frac{1+zw}{1-zw},-i\frac{z+w}{1-zw}\Big)\]
	which is precisely equals to $F\circ sym(z,w)$ by equations (\ref{Definition of sym}) and (\ref{DefnOfF}).
	
	To see that $sym_{\Omega_1}$ is proper, note that
	\[sym_{\Omega_1} =F\circ sym \circ H^{-1} \]
	and both $F$ and $H$ are biholomorphisms and $sym|_{\D \times \D}$ is a proper map. Since biholomorphisms are homeomorphisms as well, the conclusion that $sym_{\Omega_1} $ is proper, follows. This completes the proof.	
\end{proof}
Let us now consider the remaining biholomorphic copy $\mathcal{D}^{(2)} _1$ of $\D \times \D$. The domain is defined as
\begin{align}\label{Definition of D^2 _1}
\mathcal{D}^{(2)} _1=&\{(1:t:u:v)\in\mathbb{CP}^3:|t|^2+|u|^2-|v|^2>1,t^2 +u^2 -v^2=1,Im(u(\overline{t}+\overline{v})>0)\}\notag\\
&\cup \{(0:t:u:v)\in\mathbb{CP}^3: t^2 +u^2 -v^2=0,Im(u(\overline{t}+\overline{v}))>0\}.
\end{align}
The following theorem provides us with a biholomorphic map from $\D \times \D$ onto $\mathcal{D}^{(2)} _1$.
\begin{theorem}\label{Biholo map between D^2 and {D}^{(2)} _1}The map $J:\D \times \D \rightarrow\mathbb{CP}^3$ defined by
	\begin{align}\label{Definition of J}
	J(z,w)=(z-w:1-zw:i(1+zw):-i(z+w))
	\end{align}
	is a biholomorphism from $\D \times \D$  onto $\mathcal{D}^{(2)} _1$.
\end{theorem}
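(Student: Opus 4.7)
The plan is to exhibit an explicit holomorphic two-sided inverse $K\colon\mathcal{D}^{(2)}_1\to\D\times\D$ for $J$. The identities $z_0+iz_3=2z$, $iz_3-z_0=-2w$, and $z_1-iz_2=2$ that hold on $J(z,w)$ suggest setting
\[
K(z_0:z_1:z_2:z_3)=\left(\frac{z_0+iz_3}{z_1-iz_2},\ \frac{iz_3-z_0}{z_1-iz_2}\right).
\]
For $K$ to be well defined one needs $z_1\neq iz_2$ throughout $\mathcal{D}^{(2)}_1$; combining $z_1=iz_2$ with the quadric relation $z_1^2+z_2^2-z_3^2=z_0^2$ common to both pieces forces $z_0^2+z_3^2=0$, and plugging this back into the strict inequalities defining $\mathcal{D}^{(2)}_1$ quickly produces a contradiction in each piece. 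Then $K\circ J=\mathrm{id}$ on $\D\times\D$ is immediate, and $J\circ K=\mathrm{id}$ on $\mathcal{D}^{(2)}_1$ reduces to a short computation that uses the quadric relation (for instance one verifies $1-zw=2z_1/(z_1-iz_2)$ and $1+zw=-2iz_2/(z_1-iz_2)$ after substitution).

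To show $J(\D\times\D)\subset\mathcal{D}^{(2)}_1$, the algebraic identity $(1-zw)^2-(1+zw)^2+(z+w)^2=(z-w)^2$ places $J(z,w)$ on the defining quadric, with $z\neq w$ landing in the first piece and $z=w$ in the second. Off the diagonal, normalizing by $z-w$ gives
\[
|t|^2+|u|^2-|v|^2-1=\frac{2(1-|z|^2)(1-|w|^2)}{|z-w|^2}>0,
\]
and the factorization $\bar t+\bar v=(1+i\bar z)(1+i\bar w)/(\bar z-\bar w)$ reduces the last inequality in the definition of $\mathcal{D}^{(2)}_1$ to positivity of
\[
N(z,w):=1-|zw|^2+(1-|w|^2)\mathrm{Im}(z)+(1-|z|^2)\mathrm{Im}(w).
\]
On the diagonal the analogous computation simplifies $N$ to $(1-|z|^2)|z+i|^2$, which is manifestly positive on $\D$.

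The main obstacle is to establish positivity of $N$ on $\D\times\D$ and, simultaneously, to pin down $K(\mathcal{D}^{(2)}_1)$ inside $\D\times\D$. Both follow from one algebraic identity: setting $a=|z|$, $b=|w|$ and substituting $\mathrm{Im}(z)=-a$, $\mathrm{Im}(w)=-b$ into $N$ produces
\[
1-a^2b^2-a(1-b^2)-b(1-a^2)=(1-ab)(1-a)(1-b).
\]
Combined with $\mathrm{Im}(z)\ge-a$, $\mathrm{Im}(w)\ge-b$, this gives a lower bound on $N$ when $1-a^2,1-b^2>0$ and an upper bound when $1-a^2,1-b^2<0$; the right-hand side is strictly positive in the first regime and strictly negative in the second. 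Since $(1-|z|^2)(1-|w|^2)>0$ forces $(z,w)$ with $z\neq w$ and $J(z,w)\in\mathcal{D}^{(2)}_1$ into exactly one of these two regimes, the second is ruled out by the imaginary-part inequality. Hence $J^{-1}(\mathcal{D}^{(2)}_1)=\D\times\D$, and since $J$ and $K$ are mutually inverse holomorphic maps, $J$ is a biholomorphism.
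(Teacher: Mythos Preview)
Your proof is correct and takes a genuinely different route from the paper's. The paper never writes down an inverse: it checks injectivity of $J$ by equating homogeneous coordinates, and for surjectivity it leans on the already-established biholomorphism $F\colon\G\to\mathcal{D}_1$ of Theorem~\ref{Biholom B/w spaces}. Given $(s:t:u:v)\in\mathcal{D}^{(2)}_1$, one shows $t\neq 0$, observes that $(u/t,v/t)\in\mathcal{D}_1$, pulls back by $F$ to locate $(z+w,zw)\in\G$, and then reads off a preimage under $J$. Your argument is instead self-contained: you produce an explicit holomorphic inverse $K$, verify the two compositions using the quadric relation, and settle the range condition $K(\mathcal{D}^{(2)}_1)\subset\D\times\D$ by the neat factorization $1-a^2b^2-a(1-b^2)-b(1-a^2)=(1-ab)(1-a)(1-b)$, which bounds $N$ from below when $|z|,|w|<1$ and from above when $|z|,|w|>1$. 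This buys independence from the $\G\leftrightarrow\mathcal{D}_1$ theorem and a closed-form inverse, at the cost of the extra algebra around $N$; the paper's approach is shorter precisely because it recycles that earlier work.

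Two small remarks. First, the identity you quote should read $iz_3-z_0=2w$ rather than $-2w$; your formula for $K$ is correct, so this is only a slip in the motivation. Second, your final paragraph treats only the off-diagonal preimage. For a point $(0:t:u:v)$ in the second piece of $\mathcal{D}^{(2)}_1$ one gets $z=w$ from $K$, and there is no condition of the form $(1-|z|^2)(1-|w|^2)>0$ available; instead the containment $|z|<1$ follows from your diagonal formula $N(z,z)=(1-|z|^2)|z+i|^2$ read in reverse, since $N>0$ and $z=-i$ would give $N=0$. Make that step explicit.
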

\begin{proof}
	For $z,w\in \D$, we have $1\pm zw\neq 0$ and hence the map is well defined and is holomorphic. It is easy to see that $J$ maps $\D \times \D$ into $\mathcal{D}^{(2)} _1$.
	
	Let us take two points $(z,w),(z^\prime,w^\prime)\in \D \times \D$ and consider the following equation
	\[(z-w:1-zw:i(1+zw):-i(z+w))=(z^\prime-w^\prime:1-z^\prime w^\prime:i(1+z^\prime w^\prime):-i(z^\prime+w^\prime)).\]
	Equating the third components we get $zw=z^\prime w^\prime$ and then equating the first and fourth components we obtain $(z,w)=(z^\prime,w^\prime)$. So injectivity of $J$ follows. To see the surjectivity, take a point $(s:t:u:v)\in \mathcal{D}^{(2)} _1$. Without loss of generality, we may assume that $s=0\,\,\text{or} \,\,1$. If $s=0$, then by the definition of $\mathcal{D}^{(2)} _1$ (equation (\ref{Definition of D^2 _1})), $(0:t:u:v)$ must satisfy the following equations
	\begin{align}\label{Complex curve in D^2 _1}
	t^2 +u^2 -v^2=0\,\, \text{and}\,\,Im(u(\overline{t}+\overline{v}))>0.
	\end{align}
	Note that $t\neq 0$. For $t=0$, these two equations lead to $Im(u\overline{v})=0$ which is a contradiction. So the equation (\ref{Complex curve in D^2 _1}) can be rewritten as
	\[1+\Big(\frac{u}{t}\Big)^2-\Big(\frac{v}{t}\Big)^2=0\,\,\text{and}\,\,Im\Big(\frac{u}{t}\Big(1+\frac{\overline{v}}{ \overline{t}}\Big)\Big)>0.\]
	This is precisely the equation of the complex curve in $\mathcal{D}_1$ from Section \ref{D_1 and G}. So by Theorem \ref{Biholom B/w spaces}, there is a $z\in \D$ such that
	\[\frac{u}{t}=i\frac{1+z^2}{1-z^2}\,\,\text{and}\,\,\frac{v}{t}=-i\frac{2z}{1-z^2}.\]
	Since $t\neq 0$, it follows that $J(z,z)=(0:t:u:v)$. Now suppose that $s=1$. If $t=0$, then $|u|^2-|v|^2>1$ and $u^2 -v^2=1$ contradicts each other. So $t\neq 0$. Again, appealing to the definition of $\mathcal{D}^{(2)} _1$ and using $t\neq 0$, we conclude that $(1:t:u:v)$ satisfies the following conditions
	\[1+\Big(\frac{u}{t}\Big)^2-\Big(\frac{v}{t}\Big)^2=\frac{1}{t^2},1+\Big|\frac{u}{t}\Big|^2-\Big|\frac{v}{t}\Big|^2>\Big|\frac{1}{t}\Big|^2\,\,\text{and}\,\,Im\Big(\frac{u}{t}\Big(1+\frac{\overline{v}}{ \overline{t}}\Big)\Big)>0.\]
	From these equations, it is clear that $\Big(\frac{u}{t},\frac{v}{t}\Big)\in \mathcal{D}_1$, and hence there exist two distinct elements $z,w\in \D$ such that
	\[\frac{u}{t} = i\frac{1+zw}{1-zw}\,\,\text{and}\,\,\frac{v}{t}=-i \frac{z+w}{1-zw}.\]
	Since $t\neq 0$, we can find a nonzero complex number $\alpha$ such that
	\[\frac{t}{1-zw}=\frac{u}{i(1+zw)}=\frac{v}{-i(z+w)}=\alpha.\]
	Using the relation $t^2 +u^2 -v^2 =1$ we obtain
	\[\alpha =\pm \frac{1}{z-w}.\]
	It is easy to conclude the following statements.
	\begin{enumerate}
		\item If $\alpha=\frac{1}{z-w}$, $J(z,w)=(1:t:u:v)$.
		\item If $\alpha=-\frac{1}{z-w}$, $J(w,z)=(1:t:u:v)$.
	\end{enumerate}
	Hence surjectivity of $J$ follows. This proves that $J$ is a bijective holomorphic mapping from $\D \times \D$ onto $\mathcal{D}^{(2)} _1$ and consequently, it is a biholomorphism. This completes the proof.
\end{proof}
We are again at the stage where one should ask for a symmetrization map equivalent to the one given by equation (\ref{Definition of sym}). From the proof of previous theorem, it is clear that whenever $(s:t:u:v)\in \mathcal{D}^{(2)} _1$, we must have $t\neq 0$. Hence, the map $sym_{\mathcal{D}^{(2)}_1}: \mathcal{D}^{(2)} _1\rightarrow\C^2$ sending $(s:t:u:v)$ to $\Big(\frac{u}{t},\frac{v}{t}\Big) $ is well defined and holomorphic. It is indeed the map equivalent to $sym$. Let us state a result similar to Theorem \ref{Equivalent sym map for Omega_1}. We leave the details for the reader.
\begin{theorem}
	There is a proper holomorphic map $sym_{\mathcal{D}^{(2)} _1} : \mathcal{D}^{(2)} _1 \rightarrow \mathcal{D}_1$ (defined above) such that the following diagram is commutative
	\[ \begin{tikzcd}
	\D \times \D \arrow{r}{J} \arrow[swap]{d}{sym} & \mathcal{D}^{(2)} _1 \arrow{d}{sym_{{\mathcal{D}^{(2)} _1}}} \\%
	\G \arrow{r}{F}& \mathcal{D}_1
	\end{tikzcd},
	\]
	where $F$ and $J$ are defined in Theorem \ref{Biholom B/w spaces} and \ref{Biholo map between D^2 and {D}^{(2)} _1}, repectively.
\end{theorem}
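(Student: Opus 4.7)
The plan is to take the formula already flagged in the paragraph preceding the theorem, namely $sym_{\mathcal{D}^{(2)}_1}(s:t:u:v) = (u/t, v/t)$, and verify well-definedness, the commutativity of the square, and properness in turn.

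For well-definedness, the crucial observation is that every point $(s:t:u:v) \in \mathcal{D}^{(2)}_1$ has $t \neq 0$; this has already been established inside the proof of the previous theorem, where the cases $s=0, t=0$ (which would force $Im(u\overline{v})=0$, contradicting the defining condition) and $s=1, t=0$ (where $|u|^2 - |v|^2 > 1$ and $u^2 - v^2 = 1$ are incompatible) were both ruled out. Since both $u/t$ and $v/t$ are invariant under rescaling of the homogeneous coordinates, the assignment descends to a well-defined holomorphic map on $\mathcal{D}^{(2)}_1$ with values in $\C^2$.

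For commutativity, a direct substitution in the explicit formula for $J$ yields, for any $(z,w) \in \D \times \D$,
\[
sym_{\mathcal{D}^{(2)}_1}(J(z,w)) = \Big( \tfrac{i(1+zw)}{1-zw},\, \tfrac{-i(z+w)}{1-zw} \Big) = F(z+w,\, zw) = F(sym(z,w)),
\]
using the definitions \eqref{Definition of sym} and \eqref{DefnOfF}. This identity has the bonus that, because $J(\D \times \D) = \mathcal{D}^{(2)}_1$ and $F(\G) = \mathcal{D}_1$, the image of $sym_{\mathcal{D}^{(2)}_1}$ automatically lies in $\mathcal{D}_1$, so the map indeed goes where the statement asserts.

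Finally, the commutative diagram can be solved for $sym_{\mathcal{D}^{(2)}_1} = F \circ sym \circ J^{-1}$; since $F$ and $J$ are biholomorphisms (in particular, homeomorphisms) and $sym: \D \times \D \to \G$ is proper (as recorded in Section 2), the composition is proper. I do not expect any serious obstacle: the only point demanding genuine attention is the exclusion $t \neq 0$ in the well-definedness step, and even that has been packaged by the previous theorem. Everything else is one explicit substitution plus the stability of properness under composition with homeomorphisms.
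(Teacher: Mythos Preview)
Your proposal is correct and follows precisely the approach the paper intends: the paper defines the map $sym_{\mathcal{D}^{(2)}_1}(s:t:u:v)=(u/t,v/t)$ just before the statement, explicitly leaves the details to the reader, and points to the analogous Theorem~\ref{Equivalent sym map for Omega_1} as the template. Your verification of well-definedness via $t\neq 0$, commutativity by direct substitution, and properness via $sym_{\mathcal{D}^{(2)}_1}=F\circ sym\circ J^{-1}$ exactly mirrors the paper's proof of Theorem~\ref{Equivalent sym map for Omega_1}.
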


\vspace{0.1in} \noindent\textbf{Acknowledgement:}
This research is supported by the University Grants Commission Centre for Advanced Studies. The first author thanks Prof.  K. Verma and the second author thanks Prof. B. Datta for fruitful discussions.

\end{document}